\newtheorem{theorem}{Theorem}[section]
\newtheorem*{theorem*}{Theorem}
\newtheorem{lemma}{Lemma}[section]
\newtheorem{corollary}{Corollary}[section]
\newtheorem{proposition}{Proposition}[section]
\newtheorem{definition}{Definition}[section]
\newtheorem{remark}{Remark}[section]
\numberwithin{equation}{section}
\begin{document}

\title[Suitable weak solutions of NSPNP]{Partial regularity of suitable weak solutions of the Navier-Stokes-Planck-Nernst-Poisson equation}
\author{Huajun Gong, Changyou Wang, Xiaotao Zhang}
\address{Shenzhen Key Laboratory of Advance Machine Learning and Applications,
College of Mathematics and Statistics, Shenzhen University, Shenzhen 518060, Guangdong, China}
\email{huajun84@szu.edu.cn}

\address{Department of Mathematics, Purdue University, West Lafayette, IN, 47907, USA.} 
\email{wang2482@purdue.edu}

\address{South China Research Center for Applied Mathematics and Interdisciplinary
Studies, South \ China Normal University, Zhong Shan Avenue West 55, Guangzhou
510631, China}
\email{xtzhang@m.scnu.edu.cn}

\maketitle

\begin{abstract}
In this paper, inspired by the seminal work by Caffarelli-Kohn-Nirenberg \cite{CKN} on the incompressible Navier-Stokes equation,
we establish the existence of a suitable weak solution to the Navier-Stokes-Planck-Nernst-Poisson 
equation in dimension three, which is shown to be smooth away from a closed set whose $1$-dimensional parabolic Hausdorff measure is zero.
\end{abstract}
\maketitle

\section{Introduction}
Let $\Omega \subset \mathbb{R}^3$ be  a bounded, smooth domain and $0<T< \infty$. 
We consider the following Navier-Stokes-Nernst-Planck-Poisson equation:
\begin{equation}\label{B}\left
\{\begin{array}{l}
\large{ \partial _t u +(u\cdot \nabla ) u -\Delta u + \nabla P = -(n^+-n^-)\nabla \Psi,}\\
\large{\text{div}\ u=0,}\\
\large{ \partial_t n^+ + (u\cdot \nabla )n^+ -\Delta n^+ = \text{div} (n^+ \nabla \Psi),}\\
\large{ \partial_t n^- + (u\cdot \nabla )n^- -\Delta n^- = -\text{div} (n^- \nabla \Psi),}\\
\large{ -\Delta \Psi =n^+ -n^- ,}
\end{array}
\right. \ \ {\rm{in}}\ \  \Omega\times (0,T),
\end{equation}
where $u:\Omega\times(0,T) \rightarrow \mathbb{R}^3$ denotes the velocity field of fluid, 
$P:\Omega\times(0,T)\rightarrow \mathbb{R}$ denotes the pressure function, $n^+, n^-:\Omega\times(0,T) \rightarrow \mathbb{R}$ are the number densities of positively and negatively charged constituents, 
and $\Psi$ is the quasi-electrostatic potential field. Along with (\ref{B}), the initial and boundary values are:
\begin{equation}\label{initial}
  (u,n^+,n^-)=(u_0,n^+_0,n^-_0), \ \  \text{in  } \  \ \Omega\times\{0\},
\end{equation}
\begin{equation}\label{boundary}
  u=0,\quad \frac{\partial {n^+}}{\partial\nu}=\frac{\partial {n^-}}{\partial\nu}=\frac{\partial\Psi}{\partial\nu}=0 , \quad \text{on  } \partial\Omega\times (0,T),
\end{equation}
where $\nu$ denotes the exterior unit normal vector on $\partial \Omega$.

The system \eqref{B} models an isothermal, incompressible, viscous Newtonian fluid of uniform and homogeneous composition of a high number of positively and negatively charged particles ranging from colloidal to nano size. It was proposed by Rubinstein \cite{Ru} to model electro-kinetic fluids, which describes the interaction between the macroscopic fluid motion and the microscopic charge transportion. 
See Castellanos \cite{AC} for more discussions on the physics associated with \eqref{B}. 
In the system \eqref{B}, we assume a dilute fluid and therefore the electromagnetic forces are neglected. There have seen considerable interests in the mathematical analysis of the system \eqref{B}.
For example,  Jerome \cite{JJ}  has proved the existence of local strong solutions under the Kato's semigroup framework. Deng-Zhao-Cui \cite{DZC} have
established the existence and well-posedness of mild solutions in the Triebel-Lizorkin and Besov spaces of negative indices. 
We refer to Zhao-Zhang-Liu \cite{ZZL} for some time decay results of \eqref{B}. The existence of global weak solutions
of \eqref{B}, \eqref{initial} and \eqref{boundary} has been established by Schmuck \cite{Sch} under the Neumann boundary condition (for
initial data with bounded $n_0^+$ and $n_0^-$),
and Jerome-Sacco \cite{JS}, under the mixed Dirichlet boundary condition. 
Fan-Li-Nakamura \cite{FLN} have proved some regularity criteria of weak solutions to \eqref{B} on $\Omega=\mathbb R^3$ in the spirit
of Serrin. More recently, there are some interesting works by Wang-Liu-Tan \cite{WLT1, WLT2} on generalized Navier-Stokes-Planck-Nernst-Poisson equations through an energetic-variational approach.

When the underlying fluid is at rest $u=0$, the system \eqref{B} reduces to the Planck-Nernst-Poisson (PNP) equation, 
which is the drift-diffusion model for semiconductor devices, first proposed by Roosbroeck \cite{Roo} in 1950, 
that has been widely accepted and applied in semiconductor industry and in device simulation.  See Gajewski \cite{G},
Mock \cite{M}, Seidman-Troianiello \cite{ST}, and Fang-Ito \cite{FI} for results on the existence of
global weak solutions to the PNP equation.

 It remains to be an interesting question to investigate regularity properties
of weak solutions in three dimension. Motivated by the celebrated work by Scheffer \cite{Sche}, Caffarelli-Kohn-Nirenberg \cite{CKN}, and Lin \cite{LIN} on the Navier-Stokes equation, 
we introduce the notion of suitable weak solution of (\ref{B})-\eqref{initial}-(\ref{boundary}) and establish both the existence and 
partial regularity for such a weak solution. See also \cite{Hynd}, \cite{LL}, and \cite{DHW} for related works on other complex fluids.

A constitutive equation of the Navier-Stokes-Nernst-Planck-Poisson system \eqref{B} 
is the Naiver-Stokes equation: for $0<T\le \infty$, 
\begin{equation}\label{n-s}
\left\{
  \begin{array}{ll}
      \partial_t u+(u\cdot\nabla)u -\nu \Delta u + \nabla P= f, \\
  \nabla \cdot  u = 0,
  \end{array}
\right. \ \ {\rm{in}}\ \ \ Q_T=\Omega\times (0,T),
\end{equation}
with the initial-boundary condition
\begin{equation}\label{n-s-initial}
  u(\cdot,0)=u_0 \ {\rm{in}}\ \Omega; \quad  u=0 \ {\rm{on}}\ {\partial\Omega}\times [0,T).
\end{equation}
The existence of global weak solutions of \eqref{n-s} and \eqref{n-s-initial} ($T=\infty$) was established by
Leray \cite{Leray} and Hopf \cite{Temam}. While it is an outstanding open question whether \eqref{n-s} and \eqref{n-s-initial}
has a global smooth solution when $\Omega=\mathbb R^3$, there has been many research works concerning partial regularity of suitable weak solutions of \eqref{n-s}
initiated by Scheffer \cite{Sche} and then by Caffarelli-Kohn-Nirenberg \cite{CKN}, where it was proven
that the singular set has $1$-dimensional Hausdorff measure zero. Such a theorem was later simplified by Lin \cite{LIN}. 
There has also been a lot of work on the regularity criteria of \eqref{n-s}
going back to Serrin \cite{Serrin} where it has been proven that $u\in C^\infty(Q_T)$, provided $u\in L^q_tL^p_x(Q_T)$, where $p\ge 3$ and $2\le q<\infty$
satisfy
\begin{equation}\label{serrin}
\frac{2}{q}+\frac{3}{p}=1.
\end{equation}  
The end point case $p=3$ and $q=\infty$ for \eqref{serrin} was resolved by \cite{SS}.

The goal of this paper is to extend the partial regularity theory on the Navier-Stokes equation by Caffarelli-Kohn-Nirenberg \cite{CKN}
to the system \eqref{B}. We first recall the definition of suitable weak solutions to the system (\ref{B}). For $T>0$, denote
$Q_T=\Omega \times (0,T)$ and
$$
\mathcal{D}:= \Big\{ X \ | \ X\in C_0^{\infty}(Q_T, \mathbb{R}^3), \ \text{div} X=0 \Big\}.
$$
\begin{definition}\label{Def1} We say that 
$(u, n^+, n^-, \Psi)$ is a weak solution of (\ref{B}) in $Q_T$, if
$$u\in L^{\infty}([0,T], L^2(\Omega,\mathbb R^3)) \cap L^2([0,T], H^1(\Omega,\mathbb R^3)),$$
$$\Psi\in  L^{\infty}([0,T], H^1(\Omega)) \cap L^2([0,T], H^2(\Omega)), \ \ n^+,n^-\in L^2 (Q_T,\mathbb R_+),$$
and the system (\ref{B})  holds in the sense of distributions:
 for any $\varphi \in \mathcal{D}$,\\
$$
\int_{Q_T}\big( \langle u, \partial_t\varphi \rangle-\langle \nabla u, \nabla\varphi \rangle+u\otimes u: \nabla \varphi\big)\ dxdt
=\int_{Q_T}\langle (n^+- n^-)\nabla \Psi,   \varphi \rangle \ dxdt,
$$
and, for any $\phi \in C_0^{\infty}(Q_T)$,
$$
\int_{Q_T} u\cdot\nabla\phi\ dxdt=0,
$$

$$
\int_{Q_T} \big(\langle n^+, \partial_t\phi\rangle+\langle n^+, \Delta \phi \rangle+\langle n^+, u
\cdot \nabla \phi \rangle\big)\ dxdt
=\int_{Q_T} \langle n^+\nabla \Psi,  \nabla \phi \rangle \ dxdt,
$$
$$
\int_{Q_T} \big(\langle n^-, \partial_t\phi\rangle+\langle n^-, \Delta \phi \rangle+\langle n^-, u
\cdot \nabla \phi \rangle\big)\ dxdt
=-\int_{Q_T} \langle n^-\nabla \Psi,  \nabla \phi \rangle \ dxdt,
$$
and
$$\int_{\Omega} \langle\nabla\Psi, \nabla \phi\rangle\,dx=\int_{\Omega}(n^+-n^-)\phi\,dx, \ \ \forall 0<t<T, $$
where $\langle \cdot,\cdot \rangle$ denotes the inner product of $\mathbb{R}^3$.
\end{definition}

A weak solution $(u, P, n^+, n^-, \Psi)$ is called a suitable weak solution of \eqref{B}, if, in addition, it enjoys 
the following properties.

\begin{definition}\label{Def2}
A weak solution $(u,P,n^+,n^-, \Psi)$ of \eqref{B} is called a suitable weak solution of \eqref{B} in $Q_T$, if the following conditions are true:
\begin{enumerate}
  \item [(a)] $P\in L^{\frac{3}{2}}(Q_T)$, 
  \item [(b)] $n^+, n^- \in  L^2(Q_T)$, 
  \item[(c)] there exist positive constants $0<E_1, E_2< \infty$ such that,
$$\begin{cases}\displaystyle
\int_{\Omega} (|u|^2 + |\nabla \Psi|^2(x,t) \,dx \le E_1, \ \ \forall t\in (0,T),\\
\displaystyle\int_{Q_T} (|\nabla u|^2 + |\nabla^2 \Psi|^2)\, dxdt \le E_2,
\end{cases}
$$
\item $(u,P,n^+,n^-, \Psi)$ satisfy \eqref{B} in the sense of distributions on $Q_T$.
\item for any $\phi\in C^\infty(Q_T), \phi\geq 0$, the generalized energy inequality (\ref{GE-1}) holds:
\begin{equation}\label{GE-1}
 \begin{split}
     2\int _{Q_T}|\nabla u|^2 \phi \,dxdt 
&\leq  \int_{Q_T}|u|^2(\partial_t\phi +\Delta \phi) \,dxdt + \int_{Q_T}(|u|^2 +2P)u\cdot \nabla \phi \,dxdt\\
    &\ \ \ -2\int_{Q_T} \big(\nabla\Psi \otimes\nabla\Psi-\frac12|\nabla\Psi|^2I_3\big): \nabla (u\phi) \,dxdt.
 \end{split}
\end{equation}
\end{enumerate}

\end{definition}
Now we are ready to state our main theorem.

\begin{theorem}\label{main} For any $0<T\le\infty$, $u_0\in L^2(\Omega,\mathbb R^3)$, with ${\rm{div}}u_0=0$,
and $0\le n^+_0, n^-_0\in L^2(\Omega)$, with $\displaystyle\int_\Omega n_0^+\,dx= \int_\Omega n_0^-\,dx,$
there exists a suitable
weak solution $(u, P, n^+, n^-,\Psi)$ of \eqref{B}-\eqref{initial}-\eqref{boundary} in $Q_T$ such that
\begin{enumerate}
\item [(i)]
$u\in L^\infty_tL^2_x\cap L^2_tH^1_x(Q_T)$, $P\in L^\frac53(Q_T)$, $0\le n^+,n^-\in L^\infty_tL^2_x\cap L^2_tH^1_x(Q_T)$, $\Psi\in L^\infty_tH^2_x\cap L^2_tH^3_x(Q_T)$, and
\begin{equation}\label{a-bound}
\begin{split}
&\big\|(u, n^+, n^-)\big\|_{L^\infty_tL^2_x\cap L^2_tH^1_x(Q_T)} +\big\|P\big\|_{L^\frac53(Q_T)}
+\big\|\Psi\big\|_{L^\infty_tH^2_x\cap L^2_tH^3_x(Q_T)}\\
&\le C\big(\|u_0\|_{L^2(\Omega)}, \|n^+_0\|_{L^2(\Omega)},  \|n^-_0\|_{L^2(\Omega)}\big).
\end{split}
\end{equation}
\item [(ii)] $(u, n^+, n^-, \Psi)$ satisfies the following global energy inequality: for any $0<t\le T$,
\begin{equation}\label{Global-EI}
\begin{split}
&\int_\Omega (|u|^2+|\nabla\Psi|^2)(x,t)\,dx+2\int_{Q_t}(|\nabla u|^2+|n^+-n^-|^2+(n^++n^-)|\nabla\Psi|^2)\,dxds\\
&\le \int_{\Omega}(|u_0|^2+|\nabla\Psi_0|^2)(x)\,dx,
\end{split}
\end{equation} 
where $\Psi_0\in H^2(\Omega)$ solves 
$$-\Delta\Psi_0=n_0^+-n_0^- \ {\rm{in}}\ \Omega; \ \frac{\partial\Psi_0}{\partial\nu}=0 \ {\rm{on}}\ \partial\Omega.$$
\item [(iii)] there exists a closed set $\Sigma\subset Q_T$, with $\mathcal{P}^1(\Sigma)=0$, such that
$(u, n^+, n^-,\Psi)\in C^\infty(Q_T\setminus\Sigma).$
\end{enumerate}
\end{theorem}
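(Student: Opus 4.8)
The plan is to prove Theorem~\ref{main} in three stages, following the scheme of Caffarelli–Kohn–Nirenberg \cite{CKN} adapted to the coupled system \eqref{B}. First I would construct the suitable weak solution by a retarded-mollification (Leray-type) approximation: replace the transport velocity $u$ in all four evolution equations by a mollified, time-delayed version $u_\delta = \Phi_\delta * u$, and replace the coupling term $(n^+-n^-)\nabla\Psi$ and the fluxes $\pm\,\mathrm{div}(n^\pm\nabla\Psi)$ by regularized analogues. For fixed $\delta>0$ the regularized system is globally solvable on $Q_T$ by a standard fixed-point/Galerkin argument, and one derives the a priori estimates uniformly in $\delta$: testing the $u$-equation with $u$, the $n^\pm$-equations with $n^\pm$ (using $\int n_0^+=\int n_0^-$ to control the sign-indefinite coupling after adding the two equations), and the Poisson equation with $-\partial_t\Psi$ and with $\Delta^2\Psi$, one gets the bounds in \eqref{a-bound} and the global energy inequality \eqref{Global-EI}; elliptic regularity for $-\Delta\Psi=n^+-n^-$ upgrades $\Psi\in L^\infty_tH^2_x\cap L^2_tH^3_x$, and the pressure $P$ is recovered from the equation $-\Delta P=\mathrm{div}\,\mathrm{div}(u\otimes u)+\mathrm{div}((n^+-n^-)\nabla\Psi)$ with the Calderón–Zygmund estimate giving $P\in L^{5/3}$. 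Passing $\delta\to 0$ with Aubin–Lions compactness yields items (i) and (ii), and the generalized energy inequality \eqref{GE-1} survives the limit because each term is either convex (the dissipation, kept with $\le$ via weak lower semicontinuity) or passes by strong $L^3_{t,x}$ convergence of $u$ and strong convergence of $\nabla\Psi$.

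Second, I would establish the local $\varepsilon$-regularity criterion that is the heart of part (iii): there is $\varepsilon_0>0$ such that if $(u,P,n^+,n^-,\Psi)$ is a suitable weak solution on the parabolic cylinder $Q_r(z_0)$ and
$$
\frac{1}{r^2}\int_{Q_r(z_0)}\Big(|u|^3+|P|^{3/2}+|\nabla\Psi|^3+r^{3}(|n^+|^3+|n^-|^3)\Big)\,dxdt<\varepsilon_0,
$$
then $(u,n^+,n^-,\Psi)$ is Hölder continuous (hence, by bootstrapping, smooth) on $Q_{r/2}(z_0)$. I would prove this by the compactness/blow-up method of \cite{LIN}: argue by contradiction, rescale a hypothetical sequence of suitable weak solutions violating regularity at the origin, extract a limit that solves a \emph{linear} Stokes system (the nonlinear and coupling terms scale away precisely because of the smallness hypothesis and the critical scaling $u_\lambda=\lambda u(\lambda x,\lambda^2 t)$, $\Psi_\lambda=\lambda\Psi$, $n^\pm_\lambda=\lambda^3 n^\pm$), use interior estimates for Stokes to get decay of the excess quantity on a smaller cylinder, and transfer this decay back to the approximating sequence to reach a contradiction with the non-regularity. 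The generalized energy inequality \eqref{GE-1}, together with the local pressure estimate, is used exactly as in \cite{CKN} to control the "tail" terms and close the iteration; the extra terms $\int(n^++n^-)|\nabla\Psi|^2$ and $\int|n^+-n^-|^2$ from \eqref{Global-EI} provide the additional local energy bounds needed for the densities.

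Third, I would derive the Hausdorff dimension bound. The $\varepsilon$-regularity criterion implies that the singular set $\Sigma$ is contained in the set where a certain scale-invariant quantity fails to tend to zero; a covering argument with a Vitali-type lemma, combined with the global bounds $\nabla u\in L^2$, $\nabla\Psi\in L^\infty_tL^2_x$, $\nabla^2\Psi\in L^2$, and $n^\pm\in L^{10/3}$ (by interpolation of $L^\infty_tL^2_x\cap L^2_tH^1_x$), shows $\mathcal P^1(\Sigma)=0$ — here one needs the stronger criterion phrased in terms of $\limsup_{r\to 0}\frac1r\int_{Q_r}|\nabla u|^2<\varepsilon_1$ and its analogue for $\nabla^2\Psi$, exactly as in the Navier–Stokes case. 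Finally, interior smoothness on $Q_T\setminus\Sigma$ follows by a bootstrap: once $u$ is bounded, the $n^\pm$-equations are uniformly parabolic with bounded drift, so $n^\pm\in C^\alpha$, hence $\Psi\in C^{2,\alpha}$, hence the force $(n^+-n^-)\nabla\Psi\in C^\alpha$, so Schauder/Stokes theory gives $u\in C^{1,\alpha}$, and iterating yields $C^\infty$. I expect the main obstacle to be the second stage: correctly tracking how the Planck–Nernst–Poisson coupling terms behave under parabolic rescaling and verifying that, under the scale-critical smallness hypothesis, they are genuinely subcritical perturbations of the Stokes system so that the blow-up limit decouples — this requires care with the flux terms $\mathrm{div}(n^\pm\nabla\Psi)$ and with establishing a local pressure decomposition that is compatible with the coupling force.
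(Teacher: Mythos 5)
Your overall strategy (retarded mollification plus Aubin--Lions for existence; blow-up compactness for $\varepsilon$-regularity; an A--B--C--D iteration and a Vitali covering for $\mathcal P^1(\Sigma)=0$) matches the paper's architecture, and your bootstrap chain $u$ bounded $\Rightarrow n^\pm\in C^\alpha\Rightarrow\Psi\in C^{2,\alpha}\Rightarrow u\in C^{1,\alpha}\Rightarrow C^\infty$ is essentially what the paper does in Theorem~\ref{l3-small-cont}. However, the heart of your Stage~2 has a genuine gap. You propose the smallness quantity $r^{-2}\int_{Q_r}(|u|^3+|P|^{3/2}+|\nabla\Psi|^3)+r\int_{Q_r}(|n^+|^3+|n^-|^3)$ together with ``the critical scaling $u_\lambda=\lambda u$, $\Psi_\lambda=\lambda\Psi$, $n^\pm_\lambda=\lambda^3n^\pm$'' and claim the coupling ``scales away.'' But $\Psi_\lambda=\lambda\Psi$, $n^\pm_\lambda=\lambda^3n^\pm$ is \emph{not} the scaling under which \eqref{B} is invariant (that would be $\Psi_\lambda=\Psi(\lambda x,\lambda^2t)$, $n^\pm_\lambda=\lambda^2n^\pm$); under the genuine invariant scaling the coupling does \emph{not} disappear in the blow-up limit, so the limit is not a clean Stokes system, while under your non-invariant scaling the force $-(n^+_\lambda-n^-_\lambda)\nabla\Psi_\lambda$ picks up a $\lambda^{-2}$ prefactor and only stays bounded rather than vanishing. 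The paper avoids this entirely: it rescales \emph{only} $u\mapsto r_0u$, $P\mapsto r_0^2P$, keeping $n^\pm,\Psi$ fixed, which inserts an explicit factor $r_0^2$ in front of the coupling (system \eqref{BB}); it then renormalizes by $\epsilon_i$ (a \emph{linearization}, not a scaling) so the coupling enters the blow-up problem \eqref{BB2} as $\epsilon_i\,{\rm div}(\nabla\Phi_i\otimes\nabla\Phi_i-\tfrac12|\nabla\Phi_i|^2I_3)$, which vanishes because $\epsilon_i\to0$ and $\nabla\Phi_i$ is bounded in $L^4(Q_1)$ by the smallness hypothesis. Crucially the smallness criterion in Lemma~\ref{u-small} contains $|\nabla\Psi|^4$ but \emph{not} $n^\pm$; the key structural input is that $\Psi\in L^\infty_tH^2_x(Q_T)$ globally (part (i) of the theorem), so $\nabla\Psi\in L^\infty_tL^6_x$ and $\int_{Q_r}|\nabla\Psi|^4\le Cr^3$ (estimate \eqref{morrey_bd}), which makes the Maxwell stress a genuinely \emph{subcritical} perturbation of Navier--Stokes. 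Your criterion, which drags $n^\pm$ in at a scale-invariant rate, would require a local energy inequality for $n^\pm$ to close the compactness argument — something the paper never proves nor needs.

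Two further points. First, for the same reason, the Hausdorff estimate in Theorem~\ref{grade u-small} needs only $\limsup_{r\to0}r^{-1}\int_{Q_r}|\nabla u|^2<\epsilon_1^2$; your proposed ``analogue for $\nabla^2\Psi$'' is superfluous because the $\nabla\Psi$-terms are already uniformly subcritical and enter the A--B--C--D iteration only through the harmless $(\theta\rho)^2$-sized extra terms in \eqref{A,B} and \eqref{Dr}. Second, in the existence step your mollified drift $u_\delta=\Phi_\delta*u$ need not satisfy $u_\delta\cdot\nu=0$ on $\partial\Omega$; the paper carefully corrects the retarded mollifier (the construction of $\widehat\Theta_\epsilon$) precisely so the drift is tangential, which is what makes the maximum principle (Lemma~\ref{non-negative}) and hence the nonnegativity of $n^\pm$ survive for the approximate system — without this, the sign condition on $n^\pm$, which underlies the cancellation $-(p-1)p^{-1}\int(|n^+|^p-|n^-|^p)(n^+-n^-)\le0$ in Proposition~\ref{bound} and the good signs in \eqref{Global-EI}, is lost. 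The existence of the approximate solutions is also not ``a standard Galerkin argument'' for the paper: it is a contraction map in $L^4_tL^2_x$ for $(n^+,n^-)$ (Step~1 of the proof of Theorem~\ref{linear-A1}), which you should either reproduce or replace with an argument of comparable care.
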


Here $\mathcal{P}^k$, $0\le k\le 4$, denotes the $k$-dimensional Hausdorff measure on $\mathbb R^4$ with respect to the parabolic distance:
$$\delta((x,t), (y,s))=\max\big\{|x-y|,\sqrt{|t-s|}\big\}.$$

We would like to briefly mention some key steps of the proof of Theorem \ref{main}:
\begin{enumerate}
\item The existence of suitable weak solutions to \eqref{B} is established by first studying approximate systems
of \eqref{B} through modifying an ``retarded" mollification of its drifting coefficients, $\Theta_\epsilon(u)$, originally due to \cite{CKN} on the Navier-Stokes equation. 
Here we need to modify it so that its normal component vanishes on the boundary of $\Omega$ in order to guarantee the equations for $n^\pm$ enjoy both
the positivity and maximum principle property.
For the existence of suitable weak solutions to an approximate version of \eqref{B}, we employ a contraction map theorem on the
function spaces $L^4_tL^2_x(Q_T)$ for $n^\pm$ first employed by  Schmuck \cite{Sch}. Then we prove that such a sequence of suitable weak solutions to the approximate equation enjoy
some uniform estimates and hence converge to a suitable
weak solution to \eqref{B}. 
\item The partial regularity of a suitable weak solution constructed in (1) is proven by employing the fact $\Psi\in L^\infty_tH^2_x(Q_T)$ 
to perform a blowing up argument to establish an $\epsilon_0$-decay property for $(u,P)$ in 
the renormalized $L^3\times L^\frac32$-norms, and then apply
the Reisz potential estimates on parabolic Morrey spaces to obtain $L^q$-boundedness of $u$ for any $1<q<\infty$, which can yield
the $\epsilon_0$-smoothness of $(u, n^+, n^-, \Psi)$ via the bootstrap argument.
\item To obtain the size estimate of the singular set, we improve the $\epsilon_0$-regularity from (2) in a way similar to that of the Navier-Stokes equation by \cite{CKN} through establishing the so-called the ABCD Lemmas. 
\end{enumerate}

The paper is organized as follows. In section 2, we will establish the existence of the suitable weak solutions of (\ref{B})-\eqref{initial}-(\ref{boundary}).
In section 3, we will prove an $\epsilon_0$-regularity for suitable weak solutions to \eqref{B}. In section 2, we will improve the $\epsilon_0$-regularity
from section 3 and provide a proof of Theorem \ref{main}.

\section{Existence of suitable weak solutions}

In order to obtain the existence of suitable weak solutions of  \eqref{B}, we first consider the following system:
given $w\in C^{\infty}(\overline{\Omega}\times [0, T], \mathbb{R}^3)$ with div $w=0$ in $Q_T$ and
$w\cdot\nu=0$ on $\partial\Omega\times [0,T]$ , let $(u,P, n^+, n^-,\Psi)$
solve
\begin{equation}\label{A2}\left
\{\begin{array}{l}
\large{ \partial _t u +(w\cdot \nabla ) u -\Delta u + \nabla P = -(n^+-n^-)\nabla\Psi,}\\
\large{\text{div} u=0,}\\
\large{ \partial_t n^+ + (w\cdot \nabla )n^+ -\Delta n^+ = \text{div}([n^+]_+ \nabla \Psi),}\\
\large{ \partial_t n^- + (w\cdot \nabla )n^- -\Delta n^- = -\text{div}([n^-]_+ \nabla \Psi),}\\
\large{ -\Delta \Psi =n^+ -n^- ,}
\end{array}
\right.
\end{equation}
subject to the initial and boundary condition:
\begin{equation}\label{IC}
(u, n^+, n^-)|_{t=0}=(u_0, n^+_0, n^-_0) \ \ \text{in  }\ \Omega,
\end{equation} 
\begin{equation}\label{BC}
u=0, \ \ \frac{\partial n^+}{\partial\nu}=\frac{\partial {n^-}}{\partial\nu}=\frac{\partial\Psi}{\partial\nu}=0
\ \ \text{on  } \partial \Omega\times (0,T).
\end{equation}
Here $[y]_+=\max\{y, 0\}$ denotes the positive part of $y\in\mathbb R$.

We shall use the following function spaces:
\begin{equation*}
  \mathcal{V} =C_0^\infty(\Omega,\mathbb{R}^3) \cap \{ u: {\rm{div}} u=0 \},
\end{equation*}
\begin{equation*}
  {\bf H}=\text{Closure of } \mathcal{V} \text{ in } L^2(\Omega),
\end{equation*}
\begin{equation*}
  {\bf V}=\text{Closure of } \mathcal{V} \text{ in } H^1(\Omega).
\end{equation*}

Concerning \eqref{A2}, \eqref{IC} and \eqref{BC}, we have the following existence result.

\begin{theorem}\label{linear-A1}
For a bounded and smooth domain $\Omega\subset\mathbb R^3$, $u_0\in {\bf H}$,
and two nonnegative $n^+_0, n^-_0\in L^2(\Omega)$ satisfying 
$$\int_\Omega n_0^+(x)\,dx=\int_\Omega n_0^-(x)\,dx,$$
if  $w\in C^{\infty}(\overline{\Omega}\times [0, T], \mathbb{R}^3)$, with div $w=0$ in $Q_T$ and $w\cdot\nu=0$ on $\partial\Omega\times [0,T]$,
then there is a unique weak solution $(u, P, n^+,n^-,\Psi)$ of \eqref{A2}, \eqref{IC}, and \eqref{BC}
such that $n^+, n^-\ge 0$ in $\Omega\times [0,T]$, and
\begin{equation}\label{weak_sol1}
\begin{cases} u\in C([0,T], {\bf H}) \cap L^2([0,T], {\bf V}),\\
\Psi\in L^\infty([0,T], H^2(\Omega)) \cap L^2([0, T], H^3(\Omega)),\\
n^+, n^- \in L^\infty([0,T], L^2(\Omega))\cap L^2([0,T], H^1(\Omega)).
\end{cases}
\end{equation}
\end{theorem}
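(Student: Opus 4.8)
The plan is to solve the coupled system \eqref{A2}--\eqref{IC}--\eqref{BC} by a fixed-point scheme: since $w$ is a given smooth, divergence-free field tangent to $\partial\Omega$, the main nonlinearity that couples the unknowns is the quadratic interaction between $(n^+, n^-)$ and $\Psi = (-\Delta)^{-1}_{N}(n^+-n^-)$ (with Neumann boundary condition), plus the forcing $-(n^+-n^-)\nabla\Psi$ in the $u$-equation. First I would set up the solution operator $\mathcal{S}$ on the Banach space $X_T := L^4([0,T], L^2(\Omega))^2$ for the pair $(n^+, n^-)$, following Schmuck \cite{Sch}: given $(\tilde n^+, \tilde n^-)\in X_T$, define $\tilde\Psi$ by solving $-\Delta\tilde\Psi = \tilde n^+ - \tilde n^-$ in $\Omega$ with $\partial_\nu\tilde\Psi = 0$ (the compatibility $\int_\Omega(\tilde n^+-\tilde n^-)=0$ is needed here and is preserved by the flow because of the conservative form of the $n^\pm$-equations together with $w\cdot\nu=0$ and the Neumann conditions); then solve the two linear parabolic equations for $n^\pm$ with drift $w$ and right-hand sides $\mathrm{div}([\tilde n^\pm]_+\nabla\tilde\Psi)$; and finally solve the linear Stokes system for $(u,P)$ with forcing $-(\tilde n^+-\tilde n^-)\nabla\tilde\Psi$. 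One shows $\mathcal{S}$ maps a ball of $X_T$ into itself and is a contraction for $T$ small, using the parabolic smoothing estimate $\|n^\pm\|_{L^4_tL^2_x} \lesssim \|n^\pm_0\|_{L^2} + \|[\tilde n^\pm]_+\nabla\tilde\Psi\|_{L^2_tL^2_x}$ together with $\|\nabla\tilde\Psi\|_{L^\infty_tL^\infty_x}$ or $L^\infty_tW^{1,p}_x$ bounds coming from elliptic regularity applied to $\|\tilde n^+-\tilde n^-\|_{L^4_tL^2_x}$, and interpolation in the product term. This gives a unique local-in-time weak solution.

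The second step is to upgrade to a global solution by establishing a priori estimates that are uniform in time, so the local solution can be continued. Testing the $n^\pm$-equations by $n^\pm$ themselves (legitimate by the regularity just obtained), using $\mathrm{div}\, w = 0$, $w\cdot\nu=0$ to kill the transport terms, and integrating by parts in the $\mathrm{div}([n^\pm]_+\nabla\Psi)$ terms — here the positivity $n^\pm \ge 0$, proved in the next step, lets us replace $[n^\pm]_+$ by $n^\pm$ — yields the energy identity controlling $\|n^\pm\|_{L^\infty_tL^2_x}$ and $\|\nabla n^\pm\|_{L^2_tL^2_x}$. Coupled with the $\Psi$-equation this bootstraps (via elliptic regularity $\|\Psi\|_{H^2}\lesssim\|n^+-n^-\|_{L^2}$, $\|\Psi\|_{H^3}\lesssim\|n^+-n^-\|_{H^1}$) to the claimed class $\Psi\in L^\infty_tH^2_x\cap L^2_tH^3_x$. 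Testing the $u$-equation by $u$ and absorbing $\int (n^+-n^-)\nabla\Psi\cdot u$ by Hölder and the above bounds gives $u\in C([0,T],\mathbf H)\cap L^2([0,T],\mathbf V)$. Since all these bounds depend only on the data and on $T$ (not on the existence time of the local solution), the local solution extends to $[0,T]$.

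The positivity $n^\pm\ge 0$ is proved by a maximum-principle argument: test the $n^+$-equation with $-[n^+]_- = -\min\{n^+,0\}$. The transport term drops because $\mathrm{div}\,w=0$ and $w\cdot\nu=0$; the key cancellation is that on the set $\{n^+<0\}$ one has $[n^+]_+=0$, so the right-hand side $\mathrm{div}([n^+]_+\nabla\Psi)$ contributes nothing after integration by parts, and one is left with $\frac{d}{dt}\|[n^+]_-\|_{L^2}^2 + 2\|\nabla[n^+]_-\|_{L^2}^2 \le 0$, forcing $[n^+]_-\equiv 0$ since $n^+_0\ge 0$; the same works for $n^-$. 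This is exactly why the truncation $[\cdot]_+$ was inserted in \eqref{A2}: it decouples the sign-change set from the forcing and makes the maximum principle self-contained. I expect this positivity/maximum-principle step, and its compatibility with the fixed-point iteration (one should run the iteration so that positivity is preserved at each stage, or recover it a posteriori from the equation), to be the main technical point; the rest is a by-now-standard combination of Stokes/parabolic linear theory, elliptic estimates for $\Psi$, and energy estimates. Uniqueness follows from the contraction property on short intervals together with the global bounds, by a standard covering-of-$[0,T]$ argument.
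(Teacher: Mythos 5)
Your proposal follows essentially the same route as the paper's proof: a contraction map on $L^4_tL^2_x(\Omega)$ for $(n^+,n^-)$ \`a la Schmuck, positivity of $n^\pm$ by testing the $[\cdot]_+$-truncated equation with the negative part (the paper carries this out a posteriori via uniqueness in Lemma 2.1), a priori energy and elliptic estimates to extend globally and place $\Psi$ in the stated class, a linear Stokes solve for $(u,P)$ once $(n^\pm,\Psi)$ are fixed, and a Gronwall-type uniqueness argument. The proposal is correct.
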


The existence of weak solutions $(u, P, n^+, n^-, \Psi)$ to \eqref{A2}, \eqref{IC}, and \eqref{BC} will be established
by a contraction map argument. The uniqueness of such weak solutions $(u, P, n^+, n^-, \Psi)$ can be employed 
to show the non-negativity of $n^+, n^-$ as follows.

\begin{lemma}\label{non-negative} Under the assumptions of Theorem \ref {linear-A1},
the weak solution $(u, P, n^+, n^-,\Psi)$ of \eqref{A2}, \eqref{IC} and \eqref{BC}, satisfying \eqref{weak_sol1}, 
must satisfy $n^+, n^-\ge 0$ in $Q_T$.
\end{lemma}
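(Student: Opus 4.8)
The plan is to exploit the crucial structural feature of the approximate system \eqref{A2}: the nonlinear terms on the right-hand sides of the $n^\pm$-equations involve $[n^+]_+$ and $[n^-]_+$ rather than $n^+, n^-$ themselves. This truncation is precisely what makes the maximum principle available. The strategy is to test the $n^+$-equation with the negative part $[n^+]_- := \max\{-n^+,0\}$ (equivalently, to use $-[n^+]_-$ as a test function, which is legitimate by the regularity $n^+\in L^2_tH^1_x$ together with a standard Stampacchia truncation / approximation argument, since $[n^+]_-\in L^2_tH^1_x$ as well) and to derive a differential inequality for $\int_\Omega |[n^+]_-|^2\,dx$ forcing it to vanish.

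First I would record the key pointwise identities for the truncation: $\nabla [n^+]_- = -\nabla n^+\,\chi_{\{n^+<0\}}$, $[n^+]_+\,[n^+]_- \equiv 0$, and $[n^+]_+\nabla\Psi$ vanishes on $\{n^+<0\}$. Testing $\partial_t n^+ + (w\cdot\nabla)n^+ - \Delta n^+ = \mathrm{div}([n^+]_+\nabla\Psi)$ against $-[n^+]_-$ and integrating over $\Omega$, the three left-hand contributions become, respectively, $\frac12\frac{d}{dt}\int_\Omega |[n^+]_-|^2\,dx$; zero (the convection term, since $\mathrm{div}\,w=0$ in $\Omega$ and $w\cdot\nu=0$ on $\partial\Omega$ allow integration by parts with no boundary term, and $\int_\Omega (w\cdot\nabla)\big(\tfrac12|[n^+]_-|^2\big)\,dx = 0$); and $-\int_\Omega |\nabla[n^+]_-|^2\,dx$ (using the homogeneous Neumann condition $\partial n^+/\partial\nu = 0$, which on the set $\{n^+<0\}$ transfers to $[n^+]_-$, killing the boundary term). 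On the right-hand side, integrating $\mathrm{div}([n^+]_+\nabla\Psi)$ against $-[n^+]_-$ by parts gives $\int_\Omega [n^+]_+\nabla\Psi\cdot\nabla[n^+]_-\,dx$, which is identically zero because $\nabla[n^+]_-$ is supported in $\{n^+<0\}$ where $[n^+]_+ = 0$ (the Neumann boundary condition on $\Psi$ again removes the boundary term). We therefore obtain
\begin{equation*}
\frac12\frac{d}{dt}\int_\Omega |[n^+]_-|^2(x,t)\,dx + \int_\Omega |\nabla[n^+]_-|^2(x,t)\,dx \le 0
\end{equation*}
for a.e. $t\in(0,T)$. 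Since $n^+_0\ge 0$ gives $[n^+_0]_- \equiv 0$, Gronwall's inequality (indeed, simple integration in $t$) forces $\int_\Omega |[n^+]_-|^2(x,t)\,dx = 0$ for all $t$, i.e. $n^+\ge 0$ in $Q_T$. The argument for $n^-$ is identical, testing with $-[n^-]_-$; the sign in front of $\mathrm{div}([n^-]_+\nabla\Psi)$ is immaterial since that term again drops out on $\{n^-<0\}$.

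The main technical obstacle is the rigorous justification of using $[n^+]_-$ as a test function in the weak formulation, since Definition~\ref{Def1} only permits test functions in $C_0^\infty(Q_T)$ and the time-derivative term requires care: one must work with the Steklov average (or a mollification in time) of the $n^+$-equation, establish that $t\mapsto \int_\Omega|[n^+]_-|^2\,dx$ is absolutely continuous with the claimed derivative using $n^+\in L^2_tH^1_x$ and $\partial_t n^+\in L^2_t(H^1_x)'$ (the latter following from the equation itself, given $w$ smooth, $\nabla\Psi\in L^\infty_tH^1_x$, and $[n^+]_+\le n^+ \in L^2_tH^1_x$), and then pass to the limit. This is the standard parabolic machinery (cf. Ladyzhenskaya–Solonnikov–Ural'tseva or Temam), and the regularity \eqref{weak_sol1} of the solution produced by the contraction argument in Theorem~\ref{linear-A1} is exactly what makes it applicable; once this approximation step is in place, every other term either vanishes by the support property of the truncation or has a favorable sign, so no delicate estimates are needed. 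I would remark at the end that it is this non-negativity — combined with the uniqueness asserted in Theorem~\ref{linear-A1} — that allows one to replace $[n^\pm]_+$ by $n^\pm$ and thereby recover a genuine solution of \eqref{A2} with the original nonlinearity, closing the loop in the construction of suitable weak solutions.
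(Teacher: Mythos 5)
Your proposal is correct and uses essentially the same approach as the paper: test the $n^+$-equation against (minus) the negative part of $n^+$, exploit that $[n^+]_+$ and the negative part have disjoint supports so the $\mathrm{div}([n^+]_+\nabla\Psi)$ term contributes nothing, and integrate the resulting differential inequality from the nonnegative initial data. The only structural difference is that the paper does not run the energy identity directly on the given solution; it instead takes the solution produced by the contraction argument, shows \emph{that} one is nonnegative by the identical computation, and then invokes the uniqueness assertion of Theorem~\ref{linear-A1} to identify it with the given one — your direct route (with the Steklov-average justification you sketch) is mathematically equivalent and, if anything, slightly cleaner.
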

\begin{proof} This proof is similar to Lemma 1 in \cite{Sch}.
In order to prove that $n^+,n^-$ are non-negative, let $(\tilde{u}, \tilde{P}, \tilde{n}^+, \tilde{n}^-, \tilde{\Psi})$, satisfying
\eqref{weak_sol1},  be a weak solution of the system:
\begin{equation}\label{A3}
\left
\{\begin{array}{l}
\large{ \partial _t \tilde{u} +(w\cdot \nabla ) \tilde{u} -\Delta \tilde{u} + \nabla \tilde{P} = -(\tilde{n}^+-\tilde{n}^-)\nabla\tilde\Psi,}\\
\large{\text{div} \tilde{u}=0,}\\
\large{ \partial_t \tilde{n}^+ + (w\cdot \nabla )\tilde{n}^+ -\Delta \tilde{n}^+ = \text{div}([\tilde{n}^+]_+ \nabla \tilde\Psi),}\\
\large{ \partial_t \tilde{n}^- + (w\cdot \nabla )\tilde{n}^- -\Delta \tilde{n}^- = -\text{div}([\tilde{n}^-]_+ \nabla \tilde\Psi),}\\
\large{ -\Delta \tilde\Psi =\tilde{n}^+ -\tilde{n}^- ,}
\end{array}
\right.\ {\rm{in}}\  Q_T, 
\end{equation}
subject to the initial and boundary condition:
\begin{equation}\label{IC1}
(\tilde{u}, \tilde{n}^+, \tilde{n}^-)|_{t=0}=(u_0, n^+_0, n^-_0) \ \ \text{in  }\ \Omega,
\end{equation} 
\begin{equation}\label{BC1}
\tilde{u}=0, \ \ \frac{\partial \tilde{n}^+}{\partial\nu}=\frac{\partial {\tilde{n}^-}}{\partial\nu}=\frac{\partial\tilde{\Psi}}{\partial\nu}=0
\ \ \text{on  } \partial \Omega\times (0,T).
\end{equation}
The existence of such. a weak solution
$(\tilde{u}, \tilde{P}, \tilde{n}^+, \tilde{n}^-, \tilde\Psi)$ will be constructed
by Theorem 2.1 below. 

It is readily seen that $\tilde{n}^+=[\tilde{n}^+]_+-[-\tilde{n}^+]_+$.
Multiplying \eqref{A3}$_3$ by $[-\tilde{n}^+]_+$ and integrating over $\Omega$, we have that
$$\frac{1}{2} \frac{d}{dt}\int_{\Omega} |[-\tilde{n}^+]_+|^2 \,dx 
+\int_{\Omega} |\nabla [-\tilde{n}^+]_+|^2\,dx 
=\int_{\Omega} [\tilde{n}^+]_+ \nabla [-\tilde{n}^+]_+\cdot \nabla \tilde\Psi \,dx=0$$
This implies that 
$$\int_{\Omega} |[-\tilde{n}^+]_+|^2\,dx\leq \int_{\Omega} |[-n_0]_+|^2 \,dx=0,$$
since $n^+_0$ is non-negative.  Thus we conclude that
$\tilde{n}^+\ge 0$ in $Q_T$. Similarly, we can show that $\tilde{n}^-\ge 0$ in $Q_T$.
Therefore, we see that $(\tilde{u}, \tilde{P}, \tilde{n}^+, \tilde{n}^-, \tilde{\Psi})$ is also a weak solution of
\eqref{A2}, \eqref{IC}, and \eqref{BC}. From Theorem 2.1, 
 the uniqueness holds for weak solutions to \eqref{A2}, \eqref{IC}, and \eqref{BC}, satisfying \eqref{weak_sol1}.
Thus
$$(\tilde{u}, \tilde{P}, \tilde{n}^+, \tilde{n}^-, \tilde{\Psi})\equiv(u, P, n^+, n^-,\Psi)  \ {\rm{in}}\ Q_T.$$
Hence $n^+\equiv\tilde{n}^+\ge 0$ and $n^-\equiv\tilde{n}^-\ge 0$ in $Q_T$. \qed

\begin{proposition}\label{bound} Under the same assumptions as Theorem \ref {linear-A1},
if, in addition, $n^+_0, n^-_0\in L^p(\Omega)$ for some $p\ge 2$, then the weak solution $(u, P, n^+, n^-,\Psi)$ of \eqref{A2}, \eqref{IC} and \eqref{BC},
satisfying \eqref{weak_sol1},  enjoys
\begin{equation}\label{weak_sol2}
n^+, n^-\in L^\infty([0,T], L^p(\Omega)), \ \Psi\in L^\infty([0, T], W^{2,p}(\Omega)),
\end{equation}
and
\begin{equation}\label{lp-est}
\begin{split}
&\int_\Omega (|n^+|^p+|n^-|^p)(x, t)\,dx
+p(p-1)\int_{Q_t} \big[(n^+)^{p-2}|\nabla n^+|^2+(n^-)^{p-2}|\nabla n^-|^2\big]\,dxdt\\
&\le \int_\Omega (|n^+_0|^p+|n^-_0|^p)(x)\,dx, \ 0\le t<T.
\end{split}
\end{equation}
\end{proposition}
\begin{proof}
Multiplying $(\ref{A2})_3$ by $|n^+|^{p-2} n^+$ and $(\ref{A2})_4$ by  $|n^-|^{p-2}n^-$,
integrating the resulting equations over $\Omega$, and applying \eqref{A2}$_5$, we obtain that 
\begin{equation*}
  \begin{split}
     & \frac{1}{p}\frac{d}{dt}\int_{\Omega}(|n^+|^{p}+|n^-|^{p})\,dx \\
    &+(p-1)\int_{\Omega}(|\nabla n^+|^2|n^+|^{p-2} +|\nabla n^-|^2|n^-|^{p-2})\,dx\\
    &=-\frac{p-1}{p}\int_\Omega \nabla\Psi\cdot \nabla (|n^+|^p-|n^-|^p)\,dx\\
    &=-\frac{p-1}{p}\int_\Omega  (|n^+|^p-|n^-|^p)(n^+-n^-)\,dx \le 0,
  \end{split}
\end{equation*}
where we have used in the last step the fact that $n^+,n^- $ are non-negative, and
the inequality
$$(|n^+|^p-|n^-|^p)(n^+-n^-)=[(n^+)^p-(n^-)^p] [n^+-n^-]\ge 0.$$
Therefore we obtain that
$$\frac{d}{dt}\int_{\Omega}(|n^+|^{p}+|n^-|^{p})\,dx+(p-1)\int_{Q_t} \big[(n^+)^{p-2}|\nabla n^+|^2+(n^-)^{p-2}|\nabla n^-|^2\big]\,dxdt\le 0.$$
This implies \eqref{lp-est} and completes the proof.
\end{proof}

\begin{proof}[Proof of Theorem \ref{linear-A1}] {\it Step 1}: Existence.
We will modify the approach by Schmuck \cite{Sch}. For $T>0$, set the function space
$$Y_T\equiv\big\{ {\bf y}=(n^+, n^-): n^\pm \in L^4([0,T], L^2(\Omega))\big\},$$
equipped with the norm
$$\big\|(n^+,n^-)\big\|_{Y_T}=\big\|(n^+, n^-)\big\|_{L^4([0,T], L^2(\Omega))}.$$
Now we define a map $F:Y_T\mapsto Y_T$ as follows. For any $\overline{\bf y}=(\bar{n}^+, \bar{n}^-)$,
define $F(\overline{\bf y})={\bf y}=(n^+, n^-)$, where ${\bf y}$ is a solution of the system:
\begin{equation}\label{psi-eqn}
-\Delta \overline{\Psi}={\bar n}^+-{\bar n}^- \ {\rm{in}}\ \Omega; \ \  \frac{\partial \overline{\Psi}}{\partial\nu}=0 
\ {\rm{on}}\ \partial\Omega,
\end{equation}
\begin{equation}\label{y-eqn0}
\begin{cases}
\partial_t n^+ +(w\cdot\nabla) n^+-\Delta n^+={\rm{div}}([n^+]_+\nabla\overline{\Psi}) & {\rm{in}}\ Q_T,\\
\partial_t n^- +(w\cdot\nabla) n^--\Delta n^-=-{\rm{div}}([n^-]_{-}\nabla\overline{\Psi}) & {\rm{in}}\ Q_T,\\
(n^+, n^-)=(n^+_0, n^-_0) & {\rm{on}}\ \Omega\times\{t=0\}, \\
\displaystyle\frac{\partial n^+}{\partial\nu}= \frac{\partial n^-}{\partial\nu}=0 & {\rm{on}}\ \partial\Omega\times [0,T].
\end{cases}
\end{equation}
Note that for any $f, g\in L^1(\Omega)$, it holds that
$$|[f]_+|\le |f|, \ |[f]_+ - [g]_+|\le |f-g| \ {\rm{a.e.}}\ \Omega.$$
Since ${\bar n}^+-{\bar n}^-\in L^4([0,T], L^2(\Omega))$, it follows from $W^{2,2}$-theory of the
Laplace equation that $\overline{\Psi}\in L^4([0,T], W^{2,2}(\Omega))$, and
\begin{equation}\label{22-est}
\big\|\overline{\Psi}\big\|_{L^4([0,T], W^{2,2}(\Omega))}
\leq C \big\|{\bar n}^+-{\bar n}^-\big\|_{L^4([0,T], L^2(\Omega))}\leq C\big\|\bar{\bf y}\big\|_{Y_T}.
\end{equation}
By the theory of linear parabolic systems \cite{LSN},  
there exists a unique solution $(n^+, n^-)$ of \eqref{y-eqn0} in $L^\infty([0,T], L^2(\Omega))\cap L^2([0,T], H^1(\Omega))$ for any $T>0$.  
Moreover, by multiplying \eqref{y-eqn0}$_1$ by $n^+$ and \eqref{y-eqn0}$_2$ by $n^-$, integrating the resulting equation over $\Omega$, and adding these two equations, we obtain that 
\begin{equation}\label{n^+}
\begin{split}
&\frac12\frac{d}{dt}\int_{\Omega}(|n^+|^2+|n^-|^2)\,dx +\int_{\Omega} (|\nabla n^+|^2+|\nabla n^-|^2) \,dx \\
&=-\int_\Omega \langle\nabla\overline\Psi, [n^+]_+\nabla n^+ - [n^-]_{-}\nabla n^-\rangle\,dx\\
&\leq C\big\|\nabla\overline\Psi\big\|_{L^6(\Omega)} \big\||n^+|+|n^-|\big\|_{L^3(\Omega)} 
\big\||\nabla n^+|+|\nabla n^-|\big\|_{L^2(\Omega)}\\
&\leq C\|\overline\Psi\|_{W^{2,2}(\Omega)} (\|n^+\|_{L^2(\Omega)}+\|n^-\|_{L^2(\Omega)})^\frac12\\
&\cdot
[(\|n^+\|_{L^2(\Omega)}+\|n^-\|_{L^2(\Omega)})+(\|\nabla n^+\|_{L^2(\Omega)}+\|\nabla n^-\|_{L^2(\Omega)})]^\frac12\\
&\cdot[\|\nabla n^+\|_{L^2(\Omega)}+\|\nabla n^-\|_{L^2(\Omega)}]\\
&\leq C[\|{\bar n}^+\|_{L^2(\Omega)}+\|{\bar n}^-\|_{L^2(\Omega)}](\|n^+\|_{L^2(\Omega)}+\|n^-\|_{L^2(\Omega)})^\frac12\\
&\cdot
[(\|n^+\|_{L^2(\Omega)}+\|n^-\|_{L^2(\Omega)})+(\|\nabla n^+\|_{L^2(\Omega)}+\|\nabla n^-\|_{L^2(\Omega)})]^\frac12\\
&\cdot[\|\nabla n^+\|_{L^2(\Omega)}+\|\nabla n^-\|_{L^2(\Omega)}]\\
&\leq \frac12 \big(\|\nabla n^+\|_{L^2(\Omega)}^2+\|\nabla n^-\|_{L^2(\Omega)}^2)\\
&+C\big[1+\big(\|{\bar n}^+\|_{L^2(\Omega)}^4+\|{\bar n}^-\|_{L^2(\Omega)}^4\big)\big]
\cdot\big[\|n^+\|_{L^2(\Omega)}^2+\|n^-\|_{L^2(\Omega)})^2\big].
\end{split}
\end{equation}
This implies that
\begin{equation}\label{n-pm}
\begin{split}
&\frac{d}{dt}\int_{\Omega}(|n^+|^2+|n^-|^2)\,dx +\int_{\Omega} (|\nabla n^+|^2+|\nabla n^-|^2) \,dx \\
&\leq C\big[1+\big(\|{\bar n}^+\|_{L^2(\Omega)}^4+\|{\bar n}^-\|_{L^2(\Omega)}^4\big)\big]
\cdot\big[\|n^+\|_{L^2(\Omega)}^2+\|n^-\|_{L^2(\Omega)})^2\big].
\end{split}
\end{equation}
Applying Gronwall's inequality, we obtain that 
\begin{equation}\label{l2-infty}
\begin{split}
&\sup_{0\le t\le T}\int_{\Omega}(|n^+|^2+|n^-|^2)\,dx+\int_{Q_T}(|\nabla n^+|^2+|\nabla n^-|^2) \,dxdt\\
&\le e^{Ct} \exp\big\{C\int_0^t \big(\|{\bar n}^+\|_{L^2(\Omega)}^4+\|{\bar n}^-\|_{L^2(\Omega)}^4\big)\,d\tau\big\} 
\int_{\Omega}(|n^+_0|^2+|n^-_0|^2)(x)\,dx.
\end{split}
\end{equation}
For $R>0$, if $\bar {\bf y}=(\bar{n}^+,\bar{n}^-)\in Y_T$ belongs to
$$B_R^Y=\big\{\bar{\bf y}:\ \big\|\bar{\bf y}\big\|_{Y_T}\le R\big\},$$
then \eqref{l2-infty} yields that 
$$\big\|F(\bar{\bf y})\big\|_{Y_T}^4=\int_0^T\big(\int_{\Omega}(|n^+|^2+|n^-|^2)(x,t)\,dx)^2\,dt
\le C_0\exp(CT+CR^4) T\le (\frac{R}2)^4,$$
provided that $T=T_1\in (0,1]$ is chosen sufficiently small. Hence there exists a small $T=T_1\in (0,1]$ such that
$F(\bar{\bf y})\in B_{\frac{R}2}^Y\subset B_R^Y.$

Next we want to show that $F: B_R^Y\mapsto B_R^Y$ is a contractive map. 
For $i=1,2$, let ${\bar{\bf y}}_i =\big({\bar n}_i^+, {\bar n}_i^-\big)\in B_R^Y$
and ${\bf y}_i=(n^+_i, n^-_i)=F(\bar{\bf y}_i)\in B_R^Y$ be the solutions of \eqref{psi-eqn} and \eqref{y-eqn}.
Then $n_1^+-n_2^+$ and $n_1^- -n_2^-$ solve 
\begin{equation}\label{psi-eqn12}
-\Delta (\overline{\Psi}_1-\overline{\Psi}_2)
=({\bar n}_1^+-{\bar n}^-_1)-({\bar n}_2^+-{\bar n}^-_2) \ {\rm{in}}\ \Omega; \ \  \frac{\partial (\overline{\Psi}_1-\overline{\Psi}_2)}{\partial\nu}=0 
\ {\rm{on}}\ \partial\Omega,
\end{equation}
\begin{equation}\label{y-eqn}
\begin{cases}
\partial_t (n^+_1-n^+_2) +(w\cdot\nabla) (n^+_1-n^+_2)-\Delta (n^+_1-n^+_2)\\
={\rm{div}}([n^+_1]_+\nabla(\overline{\Psi}_1-\overline{\Psi}_2)) +{\rm{div}}(([n_1^+]_+ - [n_2^+]_+)\nabla\overline{\Psi}_2)& {\rm{in}}\ Q_T,\\
\partial_t (n^-_1-n^-_2) +(w\cdot\nabla) (n^-_1-n^-_2)
-\Delta (n^-_1-n^-_2)\\
=-{\rm{div}}([n^-_1]_+\nabla(\overline{\Psi}_1-\overline{\Psi}_2)) 
-{\rm{div}}(([n_1^-]_+ - [n_2^-]_+)\nabla\overline{\Psi}_2)& {\rm{in}}\ Q_T,\\
(n^+_1-n^+_2, n^-_1-n^-_2)=(0, 0) & {\rm{on}}\ \Omega\times\{t=0\}, \\
\displaystyle\frac{\partial (n^+_1-n^+_2)}{\partial\nu}
= \frac{\partial (n^-_1-n^-_2)}{\partial\nu}=0 & {\rm{on}}\ \partial\Omega\times [0,T].
\end{cases}
\end{equation}

Now multiplying \eqref{y-eqn}$_1$ by $(n^+_1-n^+_2)$, \eqref{y-eqn}$_2$ by $(n^-_1-n^-_2)$, integrating the resulting
equations over $\Omega$, and adding them together, we obtain that
\begin{equation}\label{n12}
\begin{split}
&\frac12\frac{d}{dt}\int_{\Omega}(|n^+_1-n^+_2|^2+|n^-_1-n^-_2|^2)\,dx \\
&+\int_{\Omega} (|\nabla (n^+_1-n^+_2)|^2+|\nabla (n^-_1-n^-_2)|^2) \,dx \\
&=-\int_\Omega [n_1^+]_+\langle\nabla(\overline\Psi_1-\overline\Psi_2), \nabla (n^+_1-n^+_2)\rangle
+([n_1^+]_+ - [n_2^+]_+)\langle\nabla\overline{\Psi}_2, \nabla(n_1^+-n_2^+)\rangle\,dx\\
&\ \ \ +\int_\Omega [n_1^-]_+\langle\nabla(\overline\Psi_1-\overline\Psi_2), \nabla (n^-_1-n^-_2)\rangle
+([n_1^-]_+ - [n_2^-]_+)\langle\nabla\overline{\Psi}_2, \nabla(n_1^- - n_2^-)\rangle\,dx\\
&\leq  C\big\|\nabla\overline{\Psi}_2\big\|_{L^6(\Omega)}
\big(\|[n_1^+]_+-[n_2^+]_+\|_{L^3(\Omega)}\|\nabla(n_1^+-n_2^+)\|_{L^2(\Omega)}\\
&\qquad\qquad\qquad+\|[n_1^-]_+-[n_2^-]_+\|_{L^3(\Omega)}\|\nabla(n_1^--n_2^-)\|_{L^2(\Omega)}\big)\\
&\ +C\big\|\nabla(\overline{\Psi}_1-\overline{\Psi}_2)\big\|_{L^6(\Omega)}
\big(\|[n_1^+]_+\|_{L^3(\Omega)}\|\nabla(n_1^+-n_2^+)\|_{L^2(\Omega)}\\
&\qquad\qquad\qquad\qquad\ \ \ \ \ \ \ +\|[n_1^-]_+\|_{L^3(\Omega)}\|\nabla(n_1^--n_2^-)\|_{L^2(\Omega)}\big)\\
&\leq C\big\|{\bar n}_2^+ - {\bar n}_2^-\big\|_{L^2(\Omega)} \Big\{\|n_1^+-n_2^+\|_{L^2(\Omega)}\|\nabla(n_1^+-n_2^+)\|_{L^2(\Omega)}\\
&\ \ \qquad\qquad\qquad\ \ \ \ \ \ \ \ \ +\|n_1^+-n_2^+\|_{L^2(\Omega)}^\frac12\|\nabla(n_1^+ - n_2^+)\|_{L^2(\Omega)}^\frac32\\
&\qquad\qquad\qquad\ \ \ \ \ \ \ \ \ \ \ +\|n_1^{-}-n_2^{-}\|_{L^2(\Omega)}\|\nabla(n_1^{-}-n_2^{-})\|_{L^2(\Omega)}\\
&\qquad\qquad\qquad\ \ \ \ \ \ \ \ \ \ \ +\|n_1^{-}-n_2^{-}\|_{L^2(\Omega)}^\frac12\|\nabla(n_1^{-}-n_2^{-})\|_{L^2(\Omega)}^\frac32\Big\}\\
&\ +C\Big[\big\|{\bar n}_1^+ - {\bar n}_2^+\big\|_{L^2(\Omega)}
+ \big\|{\bar n}_1^{-} - {\bar n}_2^{-}\big\|_{L^2(\Omega)}\Big]\Big\{\|n_1^+\|_{L^2(\Omega)}\|\nabla(n_1^+-n_2^+)\|_{L^2(\Omega)}\\
&\qquad\qquad\qquad\qquad\qquad\qquad\qquad+\|n_1^+\|_{L^2(\Omega)}^\frac12\|\nabla n_1^+\|_{L^2(\Omega)}^\frac12\|\nabla(n_1^+-n_2^+)\|_{L^2(\Omega)}\\
&\qquad\qquad\qquad\qquad\qquad\qquad\qquad+\|n_1^{-}\|_{L^2(\Omega)}\|\nabla(n_1^{-}-n_2^{-})\|_{L^2(\Omega)}\\
&\qquad\qquad\qquad\qquad\qquad\qquad\qquad+\|n_1^{-}\|_{L^2(\Omega)}^\frac12\|\nabla n_1^{-}\|_{L^2(\Omega)}^\frac12\|\nabla(n_1^{-}-n_2^{-})\|_{L^2(\Omega)}\Big\}\\
&\leq \frac12\Big(\|\nabla(n_1^+-n_2^+)\|_{L^2(\Omega)}^2+\|\nabla(n_1^{-}-n_2^{-})\|_{L^2(\Omega)}^2\Big)\\
&\quad+C\Big(1+\big\|{\bar n}_1^{-}\big\|_{L^2(\Omega)}^4+ \big\|{\bar n}_2^{-}\big\|_{L^2(\Omega)}^4\Big)
\Big( \|n_1^+-n_2^+\|_{L^2(\Omega)}^2+\|n_1^{-}-n_2^{-}\|_{L^2(\Omega)}^2  \Big)\\
&\quad+C\Big\{\big\|n_1^+\big\|_{L^2(\Omega)}\big\|n_1^+\big\|_{H^1(\Omega)}+
\big\|n_1^{-}\big\|_{L^2(\Omega)}\big\|n_1^{-}\big\|_{H^1(\Omega)}\Big\}\\
&\qquad\ \ \ \cdot\Big[\big\|{\bar n}_1^+ - {\bar n}_2^+\big\|_{L^2(\Omega)}^2
+ \big\|{\bar n}_1^{-} - {\bar n}_2^{-}\big\|_{L^2(\Omega)}^2\Big],
\end{split}
\end{equation}
where we have used the following inequalities:  for any $f, g\in H^1(\Omega)$, 
\begin{equation*}
\begin{cases}
\|f_+\|_{L^3(\Omega)}\le \|f\|_{L^3(\Omega)}\le C\|f\|_{L^2(\Omega)}^\frac12\|\nabla f\|_{L^2(\Omega)}^\frac12,\\
\|f_+ -g_+\|_{L^3(\Omega)}\le \|f-g\|_{L^3(\Omega)}\le C\|f-g\|_{L^2(\Omega)}^\frac12\|\nabla (f-g)\|_{L^2(\Omega)}^\frac12.
\end{cases}
\end{equation*}
Therefore we conclude that
\begin{eqnarray}\label{contra1}
&&\frac{d}{dt}\int_{\Omega}(|n^+_1-n^+_2|^2+|n^-_1-n^-_2|^2)\,dx 
+\int_{\Omega} (|\nabla (n^+_1-n^+_2)|^2+|\nabla (n^-_1-n^-_2)|^2) \,dx \nonumber\\
&&\leq C\Big(1+\big\|{\bar n}_1^{-}\big\|_{L^2(\Omega)}^4+ \big\|{\bar n}_2^{-}\big\|_{L^2(\Omega)}^4\Big)
\Big( \|n_1^+-n_2^+\|_{L^2(\Omega)}^2+\|n_1^{-}-n_2^{-}\|_{L^2(\Omega)}^2  \Big)\nonumber\\
&&+C\Big\{\big\|n_1^+\big\|_{L^2(\Omega)}\big\|n_1^+\big\|_{H^1(\Omega)}+
\big\|n_1^{-}\big\|_{L^2(\Omega)}\big\|n_1^{-}\big\|_{H^1(\Omega)}\Big\}\nonumber\\
&&\qquad \cdot\Big[\big\|{\bar n}_1^+ - {\bar n}_2^+\big\|_{L^2(\Omega)}^2
+ \big\|{\bar n}_1^{-} - {\bar n}_2^{-}\big\|_{L^2(\Omega)}^2\Big].
\end{eqnarray}
Applying Gronwall's inequality, we obtain that
\begin{eqnarray}\label{l2-infty1}
&&\sup_{0\le t\le T} \int_{\Omega}(|n^+_1-n^+_2|^2+|n^-_1-n^-_2|^2)\,dx 
+\int_{Q} (|\nabla (n^+_1-n^+_2)|^2+|\nabla (n^-_1-n^-_2)|^2)\,dxdt\nonumber\\
&&\leq \alpha(T) \beta^\frac12(T) \Big\{\int_0^T[\big\|{\bar n}_1^+ - {\bar n}_2^+\big\|_{L^2(\Omega)}^4
+ \big\|{\bar n}_1^{-} - {\bar n}_2^{-}\big\|_{L^2(\Omega)}^4]\,dt\Big\}^\frac12
\end{eqnarray}
where 
$$\alpha(T)=\exp\Big(C\int_0^T\big(1+\big\|{\bar n}_1^{-}\big\|_{L^2(\Omega)}^4+ \big\|{\bar n}_2^{-}\big\|_{L^2(\Omega)}^4\big)\,dt\Big),$$
and
$$
\beta(T)=\Big(\big\||n^+_1|\big\|^2_{L^\infty([0, T], L^2(\Omega))}+\big\||n^{-}_1|\big\|^2_{L^\infty([0, T], L^2(\Omega))}\Big)
\int_0^T(\big\|n_1^+\big\|_{H^1(\Omega)}^2+
\big\|n_1^{-}\big\|_{H^1(\Omega)}^2\big\}\,dt.
$$
It follows from $({\bar n}_1^+, {\bar n}_1^{-})\in B_R^Y$ and \eqref{l2-infty} that for $0<T\le T_1$,
$$\max\big\{\alpha(T), \beta(T)\big\}\le  C(R).$$
Hence \eqref{l2-infty1} yields that for $0<T\le T_1$, 
\begin{eqnarray}\label{l2-infty2}
&&\big\|(n^+_1, n^-_1)-(n^+_2, n^-_2)\big\|_{Y_T}^4\le\int_0^T\big\{\int_{\Omega}(|n^+_1-n^+_2|^2+|n^-_1-n^-_2|^2)\,dx\big\}^2\,dt\nonumber\\
&&\le T \big\{\sup_{0\le t\le T}\int_{\Omega}(|n^+_1-n^+_2|^2+|n^-_1-n^-_2|^2)\,dx\big\}^2\nonumber\\
&&\le T\alpha^2(T)\beta(T)\int_0^T[\big\|{\bar n}_1^+ - {\bar n}_2^+\big\|_{L^2(\Omega)}^4
+ \big\|{\bar n}_1^{-} - {\bar n}_2^{-}\big\|_{L^2(\Omega)}^4]\,dt\nonumber\\
&&\le C(R)T \big\|({\bar n}^+_1, {\bar n}^-_1)-({\bar n}^+_2, {\bar n}^-_2)\big\|_{Y_T}^4\nonumber\\
&&\le 2^{-4}\big\|({\bar n}^+_1, {\bar n}^-_1)-({\bar n}^+_2, {\bar n}^-_2)\big\|_{Y_T}^4,
\end{eqnarray}
provided $T=T_1\le \min\{T_1, \frac{1}{16}C(R)\big\}$. 

This implies that $F: B_R^Y\mapsto B_R^Y$ is a contractive map
with a contraction constant $\frac12$, provided $T_2$ and $R$ are chosen sufficiently small. 
Therefore, there exists a unique fixed point ${\bf y}=(n^+, n^-)\in B_R^Y$ of $F$, i.e., ${\bf y}=F({\bf y})$.
In particular $(n^+, n^-, \Psi)$ is a solution on the interval $[0,T_2]$ of
\begin{equation}\label{psi-eqn2}
-\Delta {\Psi}={n}^+-{n}^- \ {\rm{in}}\ \Omega; \ \  \frac{\partial {\Psi}}{\partial\nu}=0 
\ {\rm{on}}\ \partial\Omega,
\end{equation}
\begin{equation}\label{y-eqn2}
\begin{cases}
\partial_t n^+ +(w\cdot\nabla) n^+-\Delta n^+={\rm{div}}([n^+]_+\nabla{\Psi}) & {\rm{in}}\ Q_T,\\
\partial_t n^- +(w\cdot\nabla) n^--\Delta n^-=-{\rm{div}}([n^-]_+\nabla{\Psi}) & {\rm{in}}\ Q_T,\\
(n^+, n^-)=(n^+_0, n^-_0) & {\rm{on}}\ \Omega\times\{t=0\}, \\
\displaystyle\frac{\partial n^+}{\partial\nu}= \frac{\partial n^-}{\partial\nu}=0 & {\rm{on}}\ \partial\Omega\times [0,T_2],
\end{cases}
\end{equation}
such that $n^\pm\in L^\infty_tL^2_x\cap L^2_tH^1_x(Q_{T_2})$, $\Psi\in L^\infty_tH^2_x\cap L^2_tH^3_x(Q_{T_2})$, and
\begin{equation}\label{l2-infty5}
\begin{split}
\big\|(n^+, n^-)\big\|_{L^\infty_tL^2_x\cap L^2_tH^1_x(Q_{T_2})}
+\big\|\Psi\big\|_{L^\infty_tH^2_x\cap L^2_tH^3_x(Q_{T_2})}\le C\big(\big\|(n_0^+,n_0^-)\big\|_{L^2(\Omega)}, T_2\big). 
\end{split}
\end{equation}
For such a solution $(n^+, n^-, \Psi)$ to \eqref{psi-eqn2} and \eqref{y-eqn2}, let $u\in L^\infty([0,T_2], {\bf H})\cap L^2([0,T_2], {\bf V})$ be a weak solution to the system:
\begin{equation}\label{u-eqn}
\begin{cases}
\partial_t{u}+(w\cdot\nabla){u}-\Delta{u}+\nabla P=-({n}^+-{n}^-)\nabla{\Psi}
&\  {\rm{in}}\ Q_T,\\
{\rm{div}} {u}=0 &\ {\rm{in}}\ Q_T,\\
{u} =u_0 &\ {\rm{in}}\ \Omega\times\{0\},\\
{u}=0 &\ {\rm{on}}\ \partial\Omega\times [0,T_2],
\end{cases}
\end{equation}
Since $(n^+-n^-)\nabla\Psi\in L^\infty([0,T_2], L^\frac32(\Omega))$, it follows from the regularity theory of the
Stokes equation that $\partial_t u, \nabla^2 u\in L^\frac32(Q_{T_2})$, and  $\nabla P\in L^\frac32(Q_{T_2}))$,
and
\begin{equation}\label{u-est0}
\begin{split}
&\big\|u\big\|_{L^\infty([0,T_2], L^2(\Omega))}+\big\|u\big\|_{L^2([0,T_2], H^1(\Omega))}
+\big\|(\partial_t u, \nabla^2 u)\big\|_{L^\frac32(Q_{T_2})}+\big\|\nabla P\big\|_{ L^\frac32(Q_{T_2})}\\
&\le C(\|u_0\|_{L^2(\Omega)}, \|(n_0^+, n_0^-)\|_{L^2(\Omega)}, T_2).
\end{split}
\end{equation} 
From the estimates \eqref{l2-infty5} and \eqref{u-est0}, we can extend $(u, P, n^+, n^-, \Psi)$ beyond
$T_2$ to be a global weak solution of \eqref{A2}-\eqref{IC}-\eqref{BC} on the interval $[0,T]$
such that both \eqref{l2-infty5} and \eqref{u-est0} hold with $T_2$ replaced by $T$. 
Finally, we know that by Lemma \ref{non-negative}, $(u, P, n^+, n^-, \Psi)$ is also a weak solution of the system \eqref{A2}
in $Q_T$.  

It is not hard to verify that since the solution $(u, P, n^+, n^-, \Psi)$ to \eqref{A2} constructed in Step 1 satisfies
the estimates \eqref{l2-infty5} and \eqref{u-est0} (with $T_2=T$), the $L^p$-theory of linear parabolic equations
\cite{LSN} implies that $\partial_t n^+, \partial_t n^-\in L^\frac54(Q_T)$. From
\begin{equation}\label{psi_t}
-\Delta(\partial_t\Psi)=\partial_t n^+-\partial_t n^- \ {\rm{in}}\ \Omega;
\ \frac{\partial}{\partial\nu}(\partial_t\Psi)=0 \ {\rm{on}}\ \partial\Omega,
\end{equation}
we can conclude by the $L^p$-theory of linear elliptic equations that $\nabla^2\partial_t\Psi\in L^\frac54(Q_T)$.

Multiplying the equation \eqref{psi_t} by $\Psi$, \eqref{A2}$_1$ by $u$, integrating  over $\Omega$ and applying integration by parts,
and then adding these two resulting equations together, we can obtain that
\begin{equation}\label{Global-EI1}
\begin{split}
&\int_\Omega (|u|^2+|\nabla\Psi|^2)(x,t)\,dx+2\int_{Q_t} (|\nabla u|^2+|n^+-n^-|^2+(n^++n^-)|\nabla\Psi|^2)\,dxds\\
&=\int_\Omega (|u_0|^2+|\nabla\Psi_0|^2)(x)\,dx+2\int_{Q_t}(n^+-n^-)(w-u)\cdot\nabla\Psi\,dxds
\end{split}
\end{equation}
holds for all $0<t\le T$.

\medskip
\noindent Step 2 {\it Uniqueness}. 
Next we want to prove that there exists at most one  weak solution of \eqref{A2}-\eqref{IC}-\eqref{BC} satisfying
the estimates \eqref{l2-infty5} and \eqref{u-est0}. Let $(u_1,P_1,\Psi_1,n^+_1,n^- _1)$ and $(u_2,P_2,\Psi_2,n^+_2,n^- _2)$ 
be two weak solutions of (\ref{A2}), \eqref{IC}, and \eqref{BC}, satisfying
\eqref{l2-infty5} and \eqref{u-est0}.  Set
$$U=u_1-u_2,\ P=P_1-P_2,\ \Psi=\Psi_1-\Psi_2,
\ N^+=n^+_1-n^+_2,\ N^-=n^-_1-n^-_2.$$
Then 
\begin{equation}\label{A3}\left
\{\begin{array}{l}
\large{ \partial _t U +(w\cdot \nabla ) U -\Delta U + \nabla P = -(N^+-N^-)\nabla\Psi_1-(n_2^+-n_2^-)\nabla\Psi,}\\
\large{\text{div}\ U=0,}\\
\large{ \partial_t N^+ + (w\cdot \nabla )N^+ -\Delta N^+ = \text{div}(N^+ \nabla \Psi_1)+\text{div} (n^+_2 \nabla \Psi),}\\
\large{ \partial_t N^- + (w\cdot \nabla )N^- -\Delta N^- = -\text{div} (N^- \nabla \Psi_1) -\text{div} (n^-_2 \nabla \Psi),}\\
\large{ -\Delta \Psi =N^+ -N^- ,}
\end{array}
\right.
\end{equation}
subject to the initial and boundary condition
\begin{equation}\label{IBC}
\begin{cases}
\displaystyle (U, N^+, N^-)\big |_{t=0}=(0, 0, 0) \ {\rm{on}}\ \Omega,\\
\displaystyle U=0, \frac{\partial N^+}{\partial\nu}=\frac{\partial N^-}{\partial\nu}=\frac{\partial \Psi}{\partial\nu}=0 \ {\rm{on}}\ \partial\Omega\times (0,T).
\end{cases}
\end{equation}
Multiplying (\ref{A3})$_1$ by $U$, (\ref{A3})$_3$ by $N^+$, 
(\ref{A3})$_4$ by $N^-$, and \eqref{A3}$_5$ by $\Psi$, integrating the resulting equations over $\Omega$,
 and adding all these equations together, we obtain that
\begin{equation}\label{est19}
  \begin{split}
     & \frac{1}{2}\frac{d}{dt}\int_{\Omega} (|U|^2+|\nabla \Psi|^2+|N^+|^2+|N^-|^2)\,dx  \\
      & +\int_{\Omega} (|\nabla U|^2 +|N^+ - N^-|^2+ |\nabla N^+ |^2 +|\nabla N^-|^2+(n_2^+ + n_2^- )|\nabla \Psi|^2)\,dx\\
&=-\int_{\Omega}\Big[ (N^+-N^-)U\cdot\nabla\Psi_1 +(n_2^+-n_2^{-})U\cdot\nabla\Psi -(N^+-N^-)w\cdot\nabla\Psi 
\\
&+(N^+-N^-)\nabla\Psi_1\cdot\nabla\Psi
+N^+\nabla\Psi_1\cdot\nabla N^+ +n_2^+\nabla\Psi\cdot\nabla N^+ -n_1^-\nabla\Psi\cdot\nabla N^-\Big]\,dx  \\   
&\leq \frac{1}{2}\big\|N^+-N^-\big\|_{L^2(\Omega)}^2 +C\|U\|_{L^3(\Omega)}^2\|\nabla\Psi_1\|_{L^6(\Omega)}^2
+C\|w\|_{L^\infty(Q)}^2 \big\|\nabla\Psi\big\|_{L^2(\Omega)}^2\\
&\ \ \ +\big\|(n^+_2, n^-_2)\big\|_{L^6(\Omega)}^2\big\|U\big\|_{L^2(\Omega)}^2
+C\big(1+\big\|\nabla\Psi_1\big\|_{L^6(\Omega)}^2\big)\big\|\nabla\Psi\|_{L^3(\Omega)}^2\\
&\ \ \ +C\|\nabla\Psi_1\|_{L^6(\Omega)}^2 \big\|N^+\big\|_{L^3(\Omega)}^2
+\big\|(n_1^-, n_2^+)\big\|_{L^6(\Omega)}^2\|\nabla\Psi\|_{L^3(\Omega)}^2\\
&\ \ \ +\frac{1}{2}\big(\big\|\nabla N^+\big\|_{L^2(\Omega)}^2+\big\|\nabla N^{-}\big\|_{L^2(\Omega)}^2\big).
\end{split}
\end{equation}
By the interpolation inequality, Sobolev's embedding theorem,  and \eqref{l2-infty5}, we have 
\begin{eqnarray*}
\big\|U\big\|_{L^3(\Omega)}^2&\leq& C\big\|U\big\|_{L^2(\Omega)}\big\|\nabla U\big\|_{L^2(\Omega)},\\
\big\|\nabla\Psi_1(t)\big\|_{L^6(\Omega)}&\leq& C\|\Psi_1(t)\|_{H^2(\Omega)}\le C, \ {\rm{a.e.}}\  t\in [0,T],\\
\big\|(n^+_2, n^-_2)\big\|_{L^6(\Omega)}+\big\|n_1^{-}\big\|_{L^6(\Omega)}
&\leq& C\sum_{i=1}^2\big\|(n_i^+, n^{-}_i)\big\|_{H^1(\Omega)},\\
\big\|N^+\big\|_{L^3(\Omega)}^2&\le& C\big\|N^+\big\|_{L^2(\Omega)}^2+
C\big\|N^+\big\|_{L^2(\Omega)}\big\|\nabla N^+\big\|_{L^2(\Omega)},\\
\big\|\nabla\Psi\big\|_{L^3(\Omega)}^2&\le& \big\|\nabla\Psi\big\|_{L^2(\Omega)}
\big\|\nabla\Psi\big\|_{L^6(\Omega)}\\
&\leq&C\big\|\nabla\Psi\big\|_{L^2(\Omega)}\big\|\Psi\big\|_{H^2(\Omega)}\\
&\leq& C\big\|\nabla\Psi\big\|_{L^2(\Omega)}\big\|N^+-N^-\big\|_{L^2(\Omega)}.
\end{eqnarray*}
Putting these estimates into \eqref{est19} and applying Young's inequality, we would obtain 
\begin{equation}\label{est20}
\begin{split}
     & \frac{d}{dt}\int_{\Omega} (|U|^2+|\nabla \Psi|^2+|N^+|^2+|N^-|^2)\,dx  \\
      & +\int_{\Omega} (|\nabla U|^2 +|N^+ - N^-|^2+ |\nabla N^+ |^2 +|\nabla N^-|^2+(n_2^+ + n_2^- )|\nabla \Psi|^2)\,dx\\
      &\le
C\Big(1+\|w\|_{L^\infty(Q)}^2+\big\|(n_1^-, n_2^+)\big\|_{H^1(\Omega)}^2\Big)\\
&\qquad \cdot\Big(\|U\|_{L^2(\Omega)}^2+\big\|N^+\big\|_{L^2(\Omega)}^2+\big\|\nabla\Psi\|_{L^2(\Omega)}^2\Big)\\
&\ +\frac12\Big(\|\nabla N^+\|_{L^2(\Omega)}^2+\|\nabla U\|_{L^2(\Omega)}^2
+\big\|N^+-N^-\big\|_{L^2(\Omega)}^2\Big).
     \end{split}
\end{equation}
This, combined with
$$\gamma(T)=\exp\Big(C\int_0^T (1+\|w|\|_{L^\infty(Q)}^2+\big\|(n_1^-, n_2^+)\big\|_{H^1(\Omega)}^2)\,dt\Big)<\infty,$$
implies that  for any $0<t<T$, 
\begin{equation*}
  \begin{split}
     &  \int_{\Omega} (|U|^2+|\nabla \Psi|^2+|N^+|^2+|N^-|^2)(x,t)\,dx\\
   & \leq    \gamma(T)\int_{\Omega} (|U|^2+|\nabla \Psi|^2+|N^+|^2+|N^-|^2)(x,0)\,dx=0.
  \end{split}
\end{equation*}
This completes the proof.\end{proof}

Next we want to provide a global $L^\frac53$-estimate of the pressure function $P$ of the weak solution
$(u, P, n^+, n^-, \Psi)$ to the system \eqref{A2}. More precisely, we have 
 
\begin{theorem}\label{linear-A1-p,en} Assume $n_0^+, n_0^-\in L^2(\Omega)$ are nonnegative,
$u_0\in {\bf H}$, and $w\in C^\infty(Q_T)$ satisfies $ {\rm{div}}w=0$ in $Q_T$ and $w\cdot\nu=0$ on $\partial\Omega\times [0,T]$.  let $(u,P,n^+,n^-,\Psi)$,
with $\displaystyle\int_{\Omega } P\,dx =0$, be the weak solution of the system (\ref{A2})
in $Q_T$  that satisfies \eqref{weak_sol1}. Then 
$P\in L^{\frac{5}{3}}(Q_T),$ and
\begin{equation}\label{p-bound}
\big\| P\big\|_{L^{\frac{5}{3}}(Q_T)}\le C\Big(1+\|w\|_{L^\infty_tL^2_x\cap L^2_tH^1_x(Q_T)}+\|n_0^+\|_{L^2(\Omega)}+\|n_0^-\|_{L^2(\Omega)}+\|u_0\|_{L^2(\Omega)}\Big).
\end{equation}
Furthermore, for every nonnegative $\phi\in C^{\infty}_0(Q_T)$, it holds that
\begin{equation}\label{suitable1}
  \begin{split}
   2\int_{Q_T} |\nabla u|^2 \phi\,dxdt  = & \int_{Q_T} |u|^2(\partial_t\phi+\Delta \phi)\,dxdt
   +\int_{Q_T} (|u|^2w+2Pu)\cdot \nabla \phi\,dxdt \\
      &  -2\int_{Q_T}\big(\nabla\Psi\otimes\nabla\Psi-\frac12|\nabla\Psi|^2I_3\big): \nabla (u\phi)\,dxdt.
  \end{split}
\end{equation}

\end{theorem}
\begin{proof}
The equation (\ref{A2})$_{1,2}$ can be written as the Stokes equation:
$$
\begin{cases}
\partial _t u  -\Delta u + \nabla P = f,\\
{\rm{div}} u=0,
\end{cases} \ {\rm{in}}\ Q_T,
$$
where $f=-(w\cdot \nabla ) u+\Delta\Psi\nabla\Psi.$
By H\"{o}lder's inequality, we have
\begin{equation}\label{f-bound}
 \begin{split}
 &\big\|f\big\|_{L^\frac53([0,T], L^{\frac{15}{14}}(\Omega))}\\
 &\le C\Big[\big\|w\big\|_{L^{10}_tL^{\frac{30}{13}}_x(Q_T)}\big\|\nabla u\big\|_{L^2(Q_T)}
 +\big\|n^+-n^-\big\|_{L^\infty([0,T], L^2(\Omega))}\big\|\nabla\Psi\big\|_{L^\infty([0,T], L^6(\Omega)}\Big]\\ 
 &\le C\Big(1+\|w\|_{L^\infty_tL^2_x\cap L^2_tH^1_x(Q_T)}+\|n_0^+\|_{L^2(\Omega)}+\|n_0^-\|_{L^2(\Omega)}+\|u_0\|_{L^2(\Omega)}\Big).
 \end{split}
\end{equation}
Here we have used the Sobolev-interpolation inequality:
$$
\big\|w\big\|_{L^{10}_tL^{\frac{30}{13}}_x(Q_T)}\le C\big\|w\big\|_{L^\infty_tL^2_x\cap L^2_tH^1_x(Q_T)}.
$$
In particular, $f\in L^\frac53([0,T], L^{\frac{15}{14}}(\Omega))$.
Applying the theorem by Sohr-Wahl \cite{SW} and \eqref{f-bound}, we obtain that $\nabla P\in L^\frac53([0,T], L^{\frac{15}{14}}(\Omega))$
and 
\begin{equation*}
\begin{split}
&\big\|\nabla P\big\|_{L^{\frac{5}{3}}([0,T], L^{\frac{15}{14}}(\Omega))}\le C\big\|f\big\|_{L^\frac53([0,T], L^{\frac{15}{14}}(\Omega))}\\
&\le C\Big(1+\big\|w\big\|_{L^\infty_tL^2_x\cap L^2_tH^1_x(Q_T)}+\|n_0^+\|_{L^2(\Omega)}+\|n_0^-\|_{L^2(\Omega)}+\|u_0\|_{L^2(\Omega)}\Big).
\end{split}
\end{equation*}
This, combined with Sobolev's inequality, implies that $P\in L^\frac53(Q_T)$ satisfies \eqref{p-bound}.

Mollifying $u, P, f, w\cdot\nabla u$ in $\mathbb{R}^4$,  
we obtain sequences of smooth functions ${u_m}$, ${P_m}$, ${f_m}$,   for $m\in \mathbb N^+$.
Then, for $m$ sufficiently large,
\begin{equation}\label{stokes}
\partial_t u_m -\Delta u_m +\nabla P_m =f_m; \quad \text{div } u_m=0,
\end{equation}
holds in a small neighborhood of $supp\, \phi$. Moreover,
$$u_m \rightarrow u \quad \text{in} \quad L^{3}_{\rm{loc}}(Q_T), \quad \nabla u_m \rightarrow \nabla u \quad \text{in} \quad  L^2_{\rm{loc}}
(Q_T),$$
$$P_m \rightarrow P \quad \text{in} \quad L^{\frac{5}{3}}_{\rm{loc}}(Q_T) , 
\quad f_m \rightarrow f \quad \text{in} \quad  (L^2_tL^\frac32_x)_{\rm{loc}}(Q_T).$$
Multiplying (\ref{stokes}) by $2u_m\phi$ and integrating by parts, we obtain that
\begin{equation*}
  \begin{split}
    2\int_{Q_T}|\nabla u_m|^2 \phi \,dxdt 
=& \int_{Q_T}|u_m|^2(\partial_t\phi +\Delta \phi) \,dxdt + 2\int_{Q_T}P_mu_m\cdot \nabla \phi \,dxdt\\
    & +2\int_{Q_T} f_m\cdot u_m\phi \,dxdt.
 \end{split}
\end{equation*}
Sending $m\rightarrow \infty$, we have 
\begin{equation*}
  \begin{split}
 2\int_{Q_T}|\nabla u|^2 \phi \,dxdt 
=& \int_{Q_T}|u|^2(\partial_t\phi +\Delta \phi) \,dxdt + 2\int_{Q_T}Pu\cdot \nabla \phi \,dxdt\\
    & +2\int_{Q_T} (-w\cdot\nabla u+\Delta\Psi\nabla\Psi)\cdot u\phi \,dxdt.
 \end{split}
\end{equation*}
Note that
$$-2\int_{Q_T} w\cdot\nabla u \cdot u\phi\,dxdt=\int_{Q_T}|u|^2 w\cdot\nabla\phi\,dxdt,$$
and
$$2\int_{Q_T}\Delta\Psi\nabla\Psi \cdot u\phi\,dxdt
=-2\int_{Q_T}\big(\nabla\Psi\otimes\nabla\Psi-\frac12|\nabla\Psi|^2 I_3\big):\nabla(u\phi)\,dxdt.$$
Thus we show that \eqref{suitable1} holds. This completes the proof. \end{proof}

Now recall the well-known Aubin-Lions' compactness Lemma, whose proof can be found at \cite{Temam} section III.
\begin{lemma}\label{compact}
Let $X_0,X_1,X_2$ be three Banach spaces, with $X_0$ and $X_2$ self-reflexive, that satisfy $X_0\subset X_1\subset X_2$. 
Suppose that the embedding of $X_0$ into $X_1$ is compact and the embedding of $X_1$ into $X_2$ is continuous. 
For $p,q\in (0,\infty)$, assume that
$$\{u_k\}_{k\in N} \subset L^{p}([0,T], X_0)$$
is a bounded sequence such that each $u_k $ has a weak derivative $\partial_t u_k$ and the sequence
$$\{\partial _t u_k\}_{k\in N}\in L^q([0,T], X_2)$$
 is also bounded. Then there is a subsequence of $u_k$ converging strongly in $L^p([0,T], X_1)$.
\end{lemma}

Now we utilize Theorem 2.1 to obtain a suitable weak solution to the system \eqref{B}.  For this, we adapt the
``retarded" mollifier technique by Caffarelli-Kohn-Nirenberg \cite{CKN} on the Navier-Stokes equation.

Let $\zeta\in C^\infty_0(\mathbb R^4)$ be non-negative and satisfy 
$$\int_{\mathbb{R}^4} \zeta \,dxdt=1 \ {\rm{and}}\  supp\, \zeta \subset \big\{ (x,t)\in\mathbb R^4:\ |x|^2 <t,\ 1<t<2 \big\}.$$
For $f\in L^1(Q_T)$,  let $\bar{f}=\mathbb{R}^3\times \mathbb{R}\mapsto \mathbb{R}^3$ be
\begin{equation*}
  \bar{f}=\left\{
            \begin{array}{ll}
              f(x,t), & \hbox{if $(x,t)\in \Omega_T$}, \\
              0, & \hbox{otherwise.}
            \end{array}
          \right.
\end{equation*}
Define the ``retarded" mollifier of $f$ by
\begin{equation}\label{modify}
  \Theta_\epsilon (f)(x,t) =\epsilon^{-4} \int_{\mathbb{R}^4} \zeta(\frac{y}{\epsilon}, \frac{\tau}{\epsilon}) \bar{f}(x-y,t-\tau)dyd\tau.
\end{equation}
Then it is well-known (see \cite{CKN} Lemma A.8) that 
\begin{equation*}
\begin{cases}
{\rm{div}} (\Theta_\epsilon(f))=0 \ {\rm{if}}\ {\rm{div}} f=0,\\
\displaystyle\sup_{0\le t\le T} \int_\Omega |\Theta_\epsilon(f)|^2(x,t)\,dx+\int_{Q_T}|\nabla(\Theta_\epsilon(f))|^2\,dxdt\\
\le\displaystyle \sup_{0\le t\le T} \int_\Omega |f|^2(x,t)\,dx+\int_{Q_T}|\nabla f|^2\,dxdt,
\end{cases}
\end{equation*}
and if $f\in L^p(Q_T)$ for $1\le p<\infty$, then $\Theta_\epsilon(f)\to f$ in $L^p(Q_T)$ as $\epsilon\to 0$.
Since $\Theta_\epsilon(f)\cdot\nu$ may not be $0$ on $\partial\Omega\times [0,T]$, we want to modify it as follows.
For $\delta>0$, let $\Omega_\delta$ be the $\delta$-neighborhood of $\Omega$, i.e.
$\Omega_\delta=\big\{y\in\mathbb R^3: {\rm{dist}}(y,\Omega)\le \delta\big\}$, and let
$\Phi_\delta:\Omega\to\Omega_\delta$ be a smooth differeomorphism such that
$$\big\|\Phi_\delta-Id\big\|_{C^1(\Omega)}\le C\delta,$$
where $Id(x)=x$, $x\in\Omega$, is the identity map. 
From the definition, we see that $\Theta_\epsilon(f)=0$ in $(\mathbb R^3\setminus \Omega_{2\epsilon})\times [0,T]$.
Hence $\widetilde{\Theta}_\epsilon(f)(x,t)=\Theta_\epsilon(f(\Phi_{2\epsilon}(x),t))$, $(x,t)\in Q_T$, satisfies that
$\widetilde{\Theta}_\epsilon(f)=0$ on $\partial\Omega\times [0,T]$. If ${\rm{div}}(f)=0$ in $Q_T$, then
$${\rm{div}}\widetilde{\Theta}_\epsilon(f)(x,t)={\rm{tr}}[\nabla\Theta_\epsilon(f)(\Phi_{2\epsilon}(x), t)(\nabla \Phi_{2\epsilon}(x)-I_3)],
\ (x,t)\in Q_T.
$$
Therefore we have that
\begin{equation*}
\begin{split}
\displaystyle\sup_{0\le t\le T} \int_\Omega |\widetilde{\Theta}_\epsilon(f)|^2(x,t)\,dx+\int_{Q_T}|\nabla(\widetilde{\Theta}_\epsilon(f))|^2\,dxdt\\
\le\displaystyle C\big(\sup_{0\le t\le T} \int_\Omega |f|^2(x,t)\,dx+\int_{Q_T}|\nabla f|^2\,dxdt\big),
\end{split}
\end{equation*}
and
\begin{equation*}
\displaystyle \int_{Q_T}|{\rm{div}}(\widetilde{\Theta}_\epsilon(f))|^2\,dxdt
\le\displaystyle C\epsilon^2\int_{Q_T}|\nabla f|^2\,dxdt.
\end{equation*}
For $0<t<T$, let $g_\epsilon(t)\in C^\infty(\overline\Omega)$ satisfy $\displaystyle\int_\Omega g_\epsilon(x,t)\,dx=0$, and solve
\begin{equation*}
-\Delta g_\epsilon(x, t)={\rm{div}}(\widetilde{\Theta}_\epsilon(f))(x,t) \ \ {\rm{in}}\ \ \Omega;\ \ \frac{\partial g_\epsilon}{\partial\nu}(x,t)=0
\ \ {\rm{on}}\ \ \partial\Omega.
\end{equation*}
By the standard elliptic theory, we have that for any $0<t<T$,
\begin{equation*}
\begin{cases}
\displaystyle \int_{\Omega}|\nabla g_\epsilon|^2(x,t)\,dx\le C \int_\Omega |\widetilde{\Theta}_\epsilon(f))|^2(x,t)\,dx\le C\int_\Omega |f|^2(x,t)\,dx,\\
\displaystyle \int_{\Omega} |\nabla^2 g_\epsilon|^2(x,t)\,dx\le C \int_\Omega |{\rm{div}}\widetilde{\Theta}_\epsilon(f))|^2(x,t)\,dx
\le C\epsilon^2\int_\Omega |\nabla f|^2(x,t)\,dx.
\end{cases}
\end{equation*}
Now we define  $\widehat{\Theta}_\epsilon(f)\in C^\infty(\overline\Omega\times (0, T), \mathbb R^3)$ by
letting
$$\widehat{\Theta}_\epsilon(f)(x,t)=\widetilde{\Theta}_\epsilon(f)(x,t)+\nabla g_\epsilon(x,t),\ \ (x,t)\in \overline\Omega\times [0, T].$$ 
Then it is easy to check that for $f\in L^\infty_tL^2_x\cap L^2_tH^1_x(Q_T)$, with ${\rm{div}}(f)=0$ in $Q_T$,
$${\rm{div}}(\widehat{\Theta}_\epsilon(f))=0 \ \ {\rm{in}}\ \ Q_T, \ \ \widehat{\Theta}_\epsilon(f)\cdot\nu=0 \ \ {\rm{on}}\ \ \partial\Omega\times [0,T],$$
\begin{equation*}
\begin{split}
\displaystyle\sup_{0\le t\le T} \int_\Omega |\widehat{\Theta}_\epsilon(f)|^2(x,t)\,dx+\int_{Q_T}|\nabla(\widehat{\Theta}_\epsilon(f))|^2\,dxdt\\
\le\displaystyle C\big(\sup_{0\le t\le T} \int_\Omega |f|^2(x,t)\,dx+\int_{Q_T}|\nabla f|^2\,dxdt\big),
\end{split}
\end{equation*}
and
$$\widehat{\Theta}_\epsilon(f)\to f \  {\rm{in}}\ L^\infty_tL^2_x\cap L^2_tH^1_x(Q_T), \ {\rm{as}}\ \epsilon\to 0.$$

For any large positive integer $M$, set $\epsilon=\frac{T}{M}$.
Let $(u_M,P_M,n^+_M,n^-_M,\Psi_M)$ solve the following system of equations:
\begin{equation}\label{A4}\left
\{\begin{array}{l}
\large{ \partial _t u_M +(\widehat\Theta_\epsilon(u_M)\cdot \nabla ) u_M -\Delta u_M + \nabla P_M = -(n^+_M-n^-_M)\nabla \Psi_M,}\\
\large{\text{div}\ u_M=0,}\\
\large{ \partial_t n^+_M + (\widehat\Theta_\epsilon(u_M)\cdot \nabla )n^+_M -\Delta n^+_M = \text{div } (n^+_M \nabla \Psi_M),}\\
\large{ \partial_t n^-_M + (\widehat\Theta_\epsilon(u_M)\cdot \nabla )n^-_M -\Delta n^-_M = -\text{div } (n^-_M \nabla \Psi_M) ,}\\
\large{ -\Delta \Psi_M =n^+_M -n^-_M.}
\end{array}
\right. \ \ {\rm{in}}\ \  Q_T,
\end{equation}
subject to the initial and boundary condition \eqref{IC} and \eqref{BC}.

Since $\widehat\Theta_\epsilon(u_M)=0$ in $Q_\epsilon$, the system \eqref{A4} decomposes
into the PNP equation and the inhomogeneous Stokes equation, both of which can be solved in the standard ways. 
While in the interval $[\epsilon, 2\epsilon]$,
 $\widehat\Theta_\epsilon (u_M)$ are smooth and their values depend only on the values of $u_M$ and $\Psi_M$ 
 at intervale $[0, \epsilon]$. Hence $(u_M,P_M,n^+_M,n^-_M,\Psi_M)$ of (\ref{A4}) on the interval $\Omega\times [\epsilon, 2\epsilon]$,
 along with the initial condition $(u_M, n^+_M, n^-_M)(\cdot, \epsilon)$ and the boundary condition \eqref{BC}, 
can be solved by Theorem \ref{linear-A1}. Keeping this process in each interval $(m\epsilon,(m+1)\epsilon), 0\leq m\leq M-1$,
we obtain a global solution $(u_M,P_M,n^+_M,n^-_M,\Psi_M)$ to \eqref{A4}, \eqref{IC}, and \eqref{BC}.

It follows from Lemma 2.1, Proposition 2.1 (for $p=2$), \eqref{l2-infty5} and \eqref{u-est0} of Theorem 2.1, and 
\eqref{p-bound} of Theorem 2.2 that $\{u_M\}$ is bounded in $L^\infty_tL^2_x\cap L^2_tH^1_x(Q_T)$,
$\{n^\pm_M\}$ are non-negative, and bounded in $L^\infty_tL^2_x\cap L^2_tH^1_x(Q_T)$, $\Psi_M$ is bounded in $L^\infty_tH^2_x\cap L^2_tH^3_x(Q_T)$,
and $\{P_M\}$ is bounded in $L^\frac53(Q_T)$. 

By the equations $(\ref{A4})_1, (\ref{A4})_3, (\ref{A4})_4$, we have that
\begin{equation*}
\begin{cases}
\partial _t u_M =-\text{div }( u_M \otimes\widehat\Theta_\epsilon(u_M)-\nabla u_M + P_MI_3)  -(n^+_M-n^-_M)\nabla \Psi_M,\\
 \partial_t n^+_M= -\text{div }( n^+_M \widehat\Theta_\epsilon(u_M) -\nabla n^+_M -n^+_M \nabla \Psi_M),\\
\partial_t n^-_M =- \text{div}(n^-_M \widehat\Theta_\epsilon(u_M) -\nabla n^-_M + n^-_M \nabla \Psi_M).
\end{cases}
\end{equation*}
It is straightforward to see that $\{\partial _t u_M\}_{M\in N}, \{\partial_t n^+_M\}_{M\in N},\{\partial_t n^-_M\}_{M\in N} $ 
are bounded in the space
$$L^{\frac{5}{3}} ([0,T], W^{-1,\frac{5}{2}}(\Omega)).$$
Hence we can apply  Lemma \ref{compact} with
$$
\left\{
  \begin{array}{ll}
    X_0 := H^1 (\Omega), & \\
    X_1 := L^2 (\Omega),& \\
    X_2 := W^{-1,\frac{5}{2}},
  \end{array}
\right.$$
to conclude that there exist $u\in L^\infty_tL^2_x\cap L^2_tH^1_x(Q_T)$,
$n^\pm\in L^\infty_tL^2_x\cap L^2_tH^1_x(Q_T)$, $\Psi\in L^\infty_tH^2_x\cap L^2_tH^3_x(Q_T)$,
and  $P\in L^\frac53(Q_T)$ such that as $M\to\infty$, after passing to a subsequence,
\begin{equation}\label{c-1}
u_m\rightharpoonup u \ {\rm{in}}\ L^2_tH^1_x(Q_T),
\   u_M \rightarrow u\ \text{in} \  L^q(Q_T) \ \forall 1<q<\frac{10}3,
\end{equation}
\begin{equation}\label{c-2}
\begin{cases}
  (n^+_M,\ n^-_M) \rightharpoonup (n^+,\ n^-) \ {\rm{in}}\ L^2_tH^1_x(Q_T),\\
   (n^+_M,\ n^-_M) \rightarrow (n^+,\ n^-)  \ \text{in  }  L^l(\Omega_T) \ \forall 1<l<\frac{10}3, 
   \end{cases}
\end{equation}
\begin{equation}\label{c-3}
\nabla\Psi_M \rightarrow \nabla\Psi  \quad \text{in  }  L^4(Q_T),
\end{equation}
and
\begin{equation}\label{c-4}
P_M \rightharpoonup P \quad \text{in  }  L^\frac53(Q_T).
\end{equation}
With \eqref{c-1}, \eqref{c-2}, \eqref{c-3}, and \eqref{c-4}, we can easily verify that $(u, P, n^+, n^-, \Psi)$ is
a weak solution of \eqref{B}, \eqref{IC}, and \eqref{BC}. 

Since $(u_M, n^+_M, n^-_M, \Psi_M)$ satisfies the global energy equality \eqref{Global-EI1}, 
with $(u, n^+, n^-, \Psi)$ and $w$ replaced by $(u_M, n^+_M, n^-_M, \Psi_M)$ and $\widehat{\Theta}_\epsilon(u_M)$
respectively, and since
$$n^+_M\to n^+,  \ n^-_M\to n^-,
\ \widehat\Theta_\epsilon(u_M)\to u, \ \nabla\Psi_M\to \nabla\Psi \ \  {\rm{in}}\ \ L^3(Q_T),
$$
it is not hard to verify that as $\epsilon\to 0$, 
$$2\int_{Q_t} (n^+_M-n^-_M)(\widehat\Theta_\epsilon(u_M)-u)\cdot\nabla\Psi_M\,dxds\rightarrow 0, \ \forall 0<t\le T, $$
and hence for any $0<t<T$, 
\begin{equation*}
\begin{split}
&\int_\Omega (|u|^2+|\nabla\Psi|^2)\,dx+2\int_{Q_t} (|\nabla u|^2+|n^+-n^-|^2+(n^++n^-)|\nabla\Psi|^2)\,dxds\\
&\le\liminf_{\epsilon\to 0} \big\{
\int_\Omega (|u_M|^2+|\nabla\Psi_M|^2)\,dx+2\int_{Q_t} (|\nabla u_M|^2+|n^+_M-n^-_M|^2+(n^+_M+n^-_M)|\nabla\Psi_M|^2)\,dxds\big\}\\
&=\liminf_{\epsilon\to 0} \big(\int_\Omega (|u_0|^2+|\nabla\Psi_0|^2)\,dx+2\int_{Q_t}(n^+_M-n^-_M)(\widehat{\Theta}_\epsilon(u_M)-u_M)\cdot\nabla\Psi_M\,dxds\big)\\
&=\int_\Omega (|u_0|^2+|\nabla\Psi_0|^2)\,dx,
\end{split}
\end{equation*}
which yields that  $(u, n^+, n^-, \Psi)$ satisfies the global energy inequality \eqref{Global-EI}.

Finally we need to verify that $(u, P, n^+, n^-,\Psi)$ satisfies the local energy inequality \eqref{GE-1}.
For this, consider a test function $\phi\in C^{\infty}(\overline{Q_T}) $ with $\phi\geq 0$ and $supp\, \phi \Subset Q_T$. 
By Theorem \ref{linear-A1-p,en}, we have
\begin{equation}\label{local}
  \begin{split}
   &2\int_{Q_T} |\nabla u_M|^2 \phi\,dxdt  =  \int_{Q_T} |u_M|^2(\partial_t\phi+\Delta \phi)\,dxdt\\
   &+\int_{Q_T} (|u_M|^2\widehat\Theta_\epsilon(u_M)+2P_Mu_M)\cdot \nabla \phi\,dxdt \\
   &-2\int_{Q_T}\big(\nabla\Psi_M\otimes\nabla \Psi_M-\frac12|\nabla\Psi_M|^2I_3\big):\nabla(u_M\phi)\,dxdt.
  \end{split}
\end{equation}
As $M\rightarrow \infty$, by the lower semicontinuity we have that
$$2\int_{Q_T} |\nabla u|^2\phi\,dxdt\le\liminf_{M\to\infty} \int_{Q_T} |\nabla u_M|^2 \phi\,dxdt,$$
while by (\ref{c-1})--(\ref{c-4}) and $\widehat\Theta_\epsilon(u_M)\to u$ in $L^3(Q_T)$ as $\epsilon\to 0$, we have
\begin{equation*}
\begin{split}
& \int_{Q_T} |u_M|^2(\partial_t\phi+\Delta \phi)\,dxdt+\int_{Q_T} (|u_M|^2\widehat\Theta_\epsilon(u_M)+2P_Mu_M)\cdot \nabla \phi \,dxdt\\
& -2\int_{Q_T}\big(\nabla\Psi_M\otimes\nabla \Psi_M-\frac12|\nabla\Psi_M|^2I_3\big):\nabla(u_M\phi)\,dxdt\\
 &\to 
 \int_{Q_T} |u|^2(\partial_t\phi+\Delta \phi)\,dxdt+\int_{Q_T} (|u|^2u+2Pu)\cdot \nabla \phi \,dxdt\\
 &\qquad-2\int_{Q_T}\big(\nabla\Psi\otimes\nabla \Psi-\frac12|\nabla\Psi|^2I_3\big):\nabla(u\phi)\,dxdt.
\end{split} 
\end{equation*}
Hence \eqref{GE-1} follows.

\section{the $\epsilon$-regularity, part I}

In this section, we will prove the partial regularity of suitable weak solutions to \eqref{B}. The crucial steps are 
two levels of $\epsilon$-regularities. 

For $(x,t)\in Q_T$ and $r>0$, set
$$B_r(x)=\big\{y\in\mathbb R^3: \ |y-x|<r\big\},\ \  Q_r(x,t)=\big\{(y,\tau)\ | \ |y-x|<r, \ t-r^2<\tau <t \big\},$$
and denote $B_r(0)$ and $Q_r(0,0)$ by $B_r$ and $Q_r$.

\begin{lemma}\label{u-small} There exist $\epsilon_0>0$ and $\theta_0\in (0, \frac12)$ such that if $(u,P, n^+, n^-, \Psi)$ 
is a suitable weak solution of the system (\ref{B}) in $Q_T$, which satisfies, for a $(x_0, t_0)\in Q_T$ and 
$0<r_0<\min\big\{{\rm{dist}}(x_0,\partial \Omega),  \sqrt{t_0}\big\}$,
\begin{equation}\label{small-1}
  r_0^{-2}\int_{Q_{r_0}(x_0,t_0)}|u|^3\,dxdt +\big(r_0^{-1}\int_{Q_{r_0}(x_0,t_0)}|\nabla \Psi|^4 \,dxdt\big)^\frac34
  +\big(r_0^{-2}\int_{Q_{r_0}(x_0,t_0)}|P|^{\frac{3}{2}}\,dxdt\big)^2 < \epsilon_0^3,
\end{equation}
then
\begin{equation}\label{decay-1}
\begin{split}
&(\theta_0r_0)^{-2}\int_{Q_{\theta_0r_0}(x_0,t_0)}|u|^3\,dxdt
+\big((\theta_0r_0)^{-2}\int_{Q_{\theta_0r_0}(x_0,t_0)}|P|^{\frac{3}{2}}\,dxdt\big)^2 \\
& \le \frac12\Big[r_0^{-2}\int_{Q_{r_0}(x_0,t_0)}|u|^3\,dxdt + \big(r_0^{-1}\int_{Q_{r_0}(x_0,t_0)}|\nabla \Psi|^4 \,dxdt\big)^\frac34\\
 &\qquad +\big(r_0^{-2}\int_{Q_{r_0}(x_0,t_0)}|P|^{\frac{3}{2}}\,dxdt\big)^2\Big].
\end{split}
\end{equation}
\end{lemma}
\begin{proof} For $z_0=(x_0,t_0)\in Q_T$ and $r_0>0$, define the scaling functions
$$\big(\tilde{u}, \tilde{P}, \tilde{n}^+, \tilde{n}^-, \tilde{\Psi}\big)(x,t)
=\big(r_0u, r_0^2P, n^+, n^-,\Psi\big)(x_0+r_0x, t_0+r_0^2 t).$$
We can verify that if $(u,P, n^+, n^-, \Psi)$ solves \eqref{B}, then $\big(\tilde{u}, \tilde{P}, \tilde{n}^+, \tilde{n}^-, \tilde{\Psi}\big)$
solves the following system: 
\begin{equation}\label{BB}\left
\{\begin{array}{l}
\large{ \partial _t \tilde{u} +(\tilde{u}\cdot \nabla ) \tilde{u} -\Delta \tilde{u} + \nabla \tilde{P} = -r_0^2(\tilde{n}^+-\tilde{n}^-)\nabla \tilde{\Psi},}\\
\large{\text{div}\ \tilde{u}=0,}\\
\large{ \partial_t \tilde{n}^+ + (\tilde{u}\cdot \nabla )\tilde{n}^+ -\Delta \tilde{n}^+ = \text{div} (\tilde{n}^+ \nabla \tilde\Psi),}\\
\large{ \partial_t \tilde{n}^- + (\tilde{u}\cdot \nabla )\tilde{n}^- -\Delta \tilde{n}^- = -\text{div} (\tilde{n}^- \nabla \tilde\Psi),}\\
\large{ -\Delta \tilde\Psi =r_0^2(\tilde{n}^+ -\tilde{n}^-),}
\end{array}
\right.
\end{equation}
From \eqref{BB}$_5$, we can see that
$$-r_0^2(\tilde{n}^+-\tilde{n}^-)\nabla \tilde{\Psi}=\Delta\tilde{\Psi}\cdot\nabla\tilde{\Psi}
={\rm{div}}(\nabla\tilde{\Psi}\otimes\nabla\tilde{\Psi}-\frac12|\nabla\tilde\Psi|^2I_3).$$
Thus \eqref{BB}$_1$ can be rewritten as 
\begin{equation}\label{BB1}
\partial _t \tilde{u} +(\tilde{u}\cdot \nabla ) \tilde{u} -\Delta \tilde{u} + \nabla \tilde{P}=
{\rm{div}}(\nabla\tilde{\Psi}\otimes\nabla\tilde{\Psi}-\frac12|\nabla\tilde\Psi|^2I_3).
\end{equation}
Because of the invariance of the first four equations of \eqref{BB} under translations and scalings, we will assume $z_0=(0,0)$ and
$r_0=1$.  We prove \eqref{decay-1} by contradication. Suppose the conclusion were false. Then for any $\theta\in (0,\frac12)$,
there would exist a sequence of suitable weak solutions $(u_i, P^i, n_i^+, n_i^-, \Psi_i)$ of \eqref{B} in $Q_1$ such that
\begin{equation}\label{small-2}
\int_{Q_{1}}|u_i|^3\,dxdt +\big(\int_{Q_1} |\nabla \Psi_i|^4\,dxdt\big)^\frac34
  +\big(\int_{Q_1}|P_i|^{\frac{3}{2}}\,dxdt\big)^2 = \epsilon_i^3\rightarrow 0,
\end{equation}
and
\begin{equation}\label{decay-2}
\begin{split}
&\theta^{-2}\int_{Q_{\theta}}|u_i|^3\,dxdt
+\big(\theta^{-2}\int_{Q_{\theta}}|P_i|^{\frac{3}{2}}\,dxdt\big)^2 \\
&>\frac12\Big[\int_{Q_{1}}|u_i|^3\,dxdt + \big(\int_{Q_1}|\nabla \Psi_i|^4) \,dxdt\big)^\frac34
  +\big(\int_{Q_{1}}|P_i|^{\frac{3}{2}}\,dxdt\big)^2\Big].
\end{split}
\end{equation}
Now we define the blowing up sequences $v_i=\frac{u_i}{\epsilon_i}, \ R_i=\frac{P_i}{\epsilon_i}, \ \Phi_i=\frac{\Psi_i}{\epsilon_i}$ on
$Q_1$. Then $(v_i, R_i)$ solves the system
\begin{equation}\label{BB2}
\begin{cases}
\qquad\qquad\partial_t v_i +\epsilon_i v_i\cdot\nabla v_i-\Delta v_i +\nabla R_i&= \epsilon_i {\rm{div}}\big(\nabla\Phi_i\otimes \nabla\Phi_i
-\frac12|\nabla\Phi_i|^2I_3\big),\\
\qquad\qquad\qquad\qquad\qquad\qquad\qquad {\rm{div}} v_i &=0, 
\end{cases}
\end{equation}
and satisfies
\begin{equation}\label{small-3}
\int_{Q_{1}}|v_i|^3\,dxdt + \big(\int_{Q_1}|\nabla \Phi_i|^4 \,dxdt\big)^\frac34
  +\big(\int_{Q_1}|R_i|^{\frac{3}{2}}\,dxdt\big)^2 = 1,
\end{equation}
\begin{equation}\label{decay-3}
\begin{split}
\theta^{-2}\int_{Q_{\theta}}|v_i|^3\,dxdt
+\big(\theta^{-2}\int_{Q_{\theta}}|R_i|^{\frac{3}{2}}\,dxdt\big)^2 
>\frac12.
\end{split}
\end{equation}
Moreover, since $(u_i, P_i, \Psi_i)$ satisfies the local energy inequality \eqref{GE-1}, we can see that
$(v_i, R_i, \Phi_i)$ satisfies a rescaled version of \eqref{GE-1}: for any $0\le \phi\in C_0^\infty(Q_1)$,
\begin{equation}\label{GE-20}
 \begin{split}
     & 2\int_{Q_1}|\nabla v_i|^2 \phi \,dxdt \\
&\leq  \int_{Q_1}|v_i|^2(\phi_t +\Delta \phi) \,dxdt +\int_{Q_1}(\epsilon_i|v_i|^2 +2 R_i)v_i\cdot \nabla \phi \,dxdt\\
    & -2\int_{Q_1} \epsilon_i (\nabla\Phi_i\otimes\nabla\Phi_i-\frac12|\nabla\Phi_i|^2 I_3):\nabla(v_i\phi) \,dxdt\\
&\leq   \int_{Q_1}|v_i|^2(\phi_t +\Delta \phi) \,dxdt +\int_{Q_1}(\epsilon_i|v_i|^2 +2 R_i)v_i\cdot \nabla \phi \,dxdt\\
&\ \ +C\epsilon_i\int_{Q_1} |\nabla\Phi_i|^2 (\phi+|v_i||\nabla\phi|)\,dxdt +\int_{Q_1}|\nabla v_i|^2\phi\,dxdt.
 \end{split}
\end{equation}
By choosing suitable test functions $\phi$, \eqref{GE-20} and \eqref{small-3} imply
that $v_i\in L^\infty_tL^2_x\cap L^2_tH^1_x(Q_\frac12)$
and there exists $C>0$ such that
\begin{equation}\label{V-bound}
\sup_{i\ge 1}\big\|v_i\big\|_{L^\infty_tL^2_x\cap L^2_tH^1_x(Q_\frac12)}\le C.
\end{equation}
 Moreover, we see from \eqref{BB2}  that 
\begin{equation}\label{Vt-bound}
\big\|\partial _t v_i\big\|_{L^\frac32([-1,0], W^{-1,\frac32}(B_1))}\le C.
\end{equation}
Indeed, for $\phi\in L^3([-1,0], W^{1,3}_0(B_1))$, we have
\begin{eqnarray*}
&& \big|\int_{Q_1} \partial_t v_i \phi\,dxdt\big|\\
&&=\big|\int_{Q_1} [(\epsilon_i v_i\otimes v_i-\nabla v_i) : \nabla \phi +R_i \text{div}\, \phi
-\epsilon_i (\nabla\Phi_i\otimes\nabla\Phi_i-\frac12|\nabla\Phi_i|^2 I_3):\nabla\phi]\,dxdt \big|\\
&&\le C \big(\|v_i\|_{L^3(Q_1)}^2 +\|\nabla \Phi_i\|_{L^3(Q_1)}^2+\|R_i\|_{L^{\frac32}(Q_1)}\big)\|\nabla \phi\|_{L^3(Q_1)}\\
&&\leq C \big\|\phi\big\|_{L^3([-1,0], W^{1,3}_0(B_1))}.
\end{eqnarray*}
From \eqref{V-bound} and \eqref{Vt-bound}, we can apply Lemma \ref{compact} to conclude that
after passing to a subsequence, there exist $v\in L^\infty_tL^2_x\cap L^2_tH^1_x(Q_\frac12)$,
$R\in L^\frac53(Q_\frac12)$ and $\Phi\in L^4_tW^{1,4}_x(Q_\frac12)$ such that
\begin{equation}\label{strong-conv1}
v_i\rightharpoonup v \ {\rm{in}}\ L^2_tH^1_x(Q_\frac12), 
\ v_i\rightarrow v \ {\rm{in}}\ L^3(Q_\frac12),
\end{equation}
and
\begin{equation}\label{weak-conv1}
R_i\rightharpoonup R \ {\rm{in}}\ L^\frac53(Q_\frac12),\  \Phi_i\rightharpoonup \Phi \ {\rm{in}}\ L^4_tW^{1,4}_x(Q_\frac12).
\end{equation}
Passing to the limit in \eqref{BB2}, we see that $(v, R)$ solves the Stokes equation:
\begin{equation}\label{limit-stokes}
\partial_t v-\Delta v+\nabla R=0; \ {\rm{div}} v=0\ \ {\rm{in}}\ \ Q_\frac12.
\end{equation}
Therefore by the standard theory on the Stokes equation, we can conclude that $v\in C^\infty(Q_\frac12)$, and
for any $\theta\in (0, \frac12)$,
\begin{equation}\label{decay-4}
\theta^{-2}\int_{Q_\theta}|v|^3\,dxdt\le C\theta^3 \int_{Q_\frac12}|v|^3\,dxdt\le C\theta^3.
\end{equation}
This and \eqref{strong-conv1} imply that for $i$ sufficiently large,
\begin{equation}\label{decay-5}
\theta^{-2}\int_{Q_\theta}|v_i|^3\,dxdt\le C\theta^3+o(1).
\end{equation}
Here $o(1)$ denotes a quantity such that $\displaystyle\lim_i o(1)=0$. 

As for the pressure function $R_i$, taking divergence of \eqref{BB2}$_1$ yields that $R_i$ solves
the Poisson equation:
\begin{equation}\label{BB3}
\Delta R_i= \epsilon_i {\rm{div}}^2\big(\nabla\Phi_i\otimes \nabla\Phi_i
-\frac12|\nabla\Phi_i|^2I_3-v_i\otimes v_i\big)\ \ {\rm{in}}\ \ B_\frac12.
\end{equation} 
By the Calderon-Zygmund theory, we can show that
\begin{equation}\label{decay-6}
\begin{split}
\theta^{-2}\int_{Q_\theta}|R_i|^\frac32\,dxdt&\le C\theta^{-2}\epsilon_i^\frac32 \int_{Q_1} (|v_i|^3+|\nabla \Phi_i|^3)\,dxdt
+C\theta^3 \int_{Q_1} |R_i|^\frac32\,dxdt\\ 
&\le C\theta^{-2}\epsilon_i^\frac32+C\theta^3.
\end{split}
\end{equation}
Adding \eqref{decay-5} and \eqref{decay-6} together, we obtain that
\begin{equation}\label{decay-7}
\theta^{-2}\int_{Q_\theta}|v_i|^3\,dxdt+\big(\theta^{-2}\int_{Q_\theta}|R_i|^\frac32\,dxdt\big)^2
\le C\theta^3+C\theta^{-2}\epsilon_i^\frac32+o(1)\le \frac14,
\end{equation}
provided we choose a sufficiently small $\theta\in (0,\frac12)$ and a sufficiently large $i$. 
It is clear that \eqref{decay-7} contradicts to \eqref{decay-3}. The proof is complete.
\end{proof}

Keep iterating Lemma \ref{u-small}, we obtain the following decay property.

\begin{corollary}\label{u-small2} There exist $\epsilon_0>0$ and $\theta_0\in (0, \frac12)$ such that if $(u,P, n^+, n^-, \Psi)$ 
is a suitable weak solution of the system (\ref{B}) in $Q_T$, which satisfies, for a $z_0=(x_0, t_0)\in Q_T$, 
$0<r_0<\min\big\{{\rm{dist}}(x_0,\partial \Omega),  \sqrt{t_0}\big\}$, and $0<\alpha<4$
\begin{equation}
\label{small-8}
\begin{split}
  &\max\Big\{r_0^{-2}\int_{Q_{r_0}(z_0)}|u|^3\,dxdt
  +\big(r_0^{-2}\int_{Q_{r_0}(z_0)}|P|^{\frac{3}{2}}\,dxdt\big)^2, \\
  &\qquad \ \sup_{0<r\le r_0} \big(r^{-(1+\alpha)}\int_{Q_r(z_0)}|\nabla\Psi|^4\,dxdt\big)^\frac34\Big\}< \frac12\epsilon_0^3,
  \end{split}
\end{equation}
then for any positive integer $k\in \mathbb N^+$, 
\begin{equation}\label{decay-8}
\begin{split}
(\theta_0^kr_0)^{-2}\int_{Q_{\theta_0^kr_0}(z_0)}|u|^3\,dxdt
+\big((\theta_0^kr_0)^{-2}\int_{Q_{\theta_0^kr_0}(z_0)}|P|^{\frac{3}{2}}\,dxdt\big)^2 \le C\epsilon_0^3\big(\frac12\big)^k.
\end{split}
\end{equation}
\end{corollary} 
\begin{proof} It is readily seen that \eqref{decay-8} follows from Lemma \ref{u-small} for $k=1$. Note that
\eqref{small-8} and \eqref{decay-8} for $k=1$ yield that
\begin{equation*}
\begin{split}
&(\theta_0r_0)^{-2}\int_{Q_{\theta_0r_0}(z_0)}|u|^3\,dxd+\big((\theta_0 r_0)^{-1}\int_{Q_{\theta_0 r_0}(z_0)}|\nabla\Psi|^4)\,dxdt\big)^\frac34\\
&\quad+\big((\theta_0r_0)^{-2}\int_{Q_{\theta_0r_0}(z_0)}|P|^{\frac{3}{2}}\,dxdt\big)^2<\epsilon_0^3.
\end{split}
\end{equation*}
Hence applying Lemma \ref{u-small}, we obtain that
\begin{eqnarray*}
&&(\theta_0^2r_0)^{-2}\int_{Q_{\theta_0^2r_0}(z_0)}|u|^3\,dxdt
+\big((\theta_0^2r_0)^{-2}\int_{Q_{\theta_0^2r_0}(z_0)}|P|^{\frac{3}{2}}\,dxdt\big)^2\\
&&\le \frac12\Big[(\theta_0r_0)^{-2}\int_{Q_{\theta_0r_0}(z_0)}|u|^3\,dxdt+\big((\theta_0r_0)^{-1}\int_{Q_{\theta_0r_0}(z_0)}|\nabla\Psi|^4\,dxdt\big)^\frac34\\
&&\ \ \ \ \ \ +\big((\theta_0r_0)^{-2}\int_{Q_{\theta_0r_0}(z_0)}|P|^{\frac{3}{2}}\,dxdt\big)^2\Big]\\
&&\le \frac12\Big[(\theta_0r_0)^{-2}\int_{Q_{\theta_0r_0}(z_0)}|u|^3\,dxdt
+\big((\theta_0r_0)^{-2}\int_{Q_{\theta_0r_0}(z_0)}|P|^{\frac{3}{2}}\,dxdt\big)^2\\
&&\ \ \ \ \ \ \ \ +\big((\theta_0 r_0)^{-1}\int_{Q_{\theta_0 r_0}(z_0)}|\nabla\Psi|^4\,dxdt\big)^\frac34\Big]\\
&&\le \frac12\Big[\frac12\Big(r_0^{-2}\int_{Q_{r_0}(z_0)}|u|^3\,dxdt +\big(r_0^{-1}\int_{Q_{r_0}(z_0)}|\nabla\Psi|^4\,dxdt\big)^\frac34\\
&&\qquad+\big(r_0^{-2}\int_{Q_{r_0}(z_0)}|P|^{\frac{3}{2}}\,dxdt\big)^2\Big)
+\big((\theta_0 r_0)^{-1}\int_{Q_{\theta_0 r_0}(z_0)}|\nabla\Psi|^4\,dxdt\big)^\frac34\Big]\\
&&\le (\frac12)^2\Big[r_0^{-2}\int_{Q_{r_0}(z_0)}|u|^3\,dxdt
+\big(r_0^{-2}\int_{Q_{r_0}(z_0)}|P|^{\frac{3}{2}}\,dxdt\big)^2\Big]\\
&&\ \ \ +(\frac12)^2\big(r_0^{-1}\int_{Q_{r_0}(z_0)}|\nabla\Psi|^4\,dxdt\big)^\frac34
+\frac12\big((\theta_0 r_0)^{-1}\int_{Q_{\theta_0 r_0}(z_0)}|\nabla\Psi|^4\,dxdt\big)^\frac34\\
&&\le (\frac12)^2\Big[r_0^{-2}\int_{Q_{r_0}(z_0)}|u|^3\,dxdt
+\big(r_0^{-2}\int_{Q_{r_0}(z_0)}|P|^{\frac{3}{2}}\,dxdt\big)^2\Big]\\
&&\ \ \ +(\frac 12)^2\epsilon_0^3 r_0^\alpha \big[\theta_0^\alpha +\frac12\big].
\end{eqnarray*}
Hence we have that for $k\ge 1$,
\begin{eqnarray*}
&&(\theta_0^kr_0)^{-2}\int_{Q_{\theta_0^kr_0}(z_0)}|u|^3\,dxdt
+\big((\theta_0^kr_0)^{-2}\int_{Q_{\theta_0^kr_0}(z_0)}|P|^{\frac{3}{2}}\,dxdt\big)^2\\
&&\le (\frac12)^k\Big[r_0^{-2}\int_{Q_{r_0}(z_0)}|u|^3\,dxdt
+\big(r_0^{-2}\int_{Q_{r_0}(z_0)}|P|^{\frac{3}{2}}\,dxdt\big)^2\Big]\\
&&\ \ \ +(\frac 12)^2\epsilon_0^3 r_0^\alpha \Big[\theta_0^{\alpha(k-1)} +\frac12\theta_0^{\alpha(k-2)}+\cdots+
(\frac12)^{k-2}\theta_0^\alpha\Big]\\
&&\le (\frac12)^k\Big[r_0^{-2}\int_{Q_{r_0}(z_0)}|u|^3\,dxdt
+\big(r_0^{-2}\int_{Q_{r_0}(z_0)}|P|^{\frac{3}{2}}\,dxdt\big)^2\Big]+2^{-(k-1)}(\theta_0 r_0)^\alpha\epsilon_0^3\\
&&\le C\epsilon_0^3 2^{-k}.
\end{eqnarray*}
This yields \eqref{decay-8} and completes the proof. \end{proof} 

With \eqref{decay-8}, we can now prove the following $\epsilon_0$-regularity property.
\begin{theorem}\label{l3-small-cont} There exists $\epsilon_0>0$ such that
for any $0<T\le \infty$, $u_0\in{\bf H}$, and $0\le n_0^\pm \in L^2(\Omega)$
with $\int_\Omega n_0^+\,dx= \int_\Omega n_0^-\,dx$, if $(u, P, n^+, n^-,\Psi)$ is
the suitable weak solution obtained by Theorem 1.3 (i), which satisfies
\begin{equation}\label{small-9}
r_0^{-2}\int_{Q_{r_0}(z_0)}|u|^3\,dxdt+\big(r_0^{-1}\int_{Q_{r_0}(z_0)}|\nabla\Psi|^4\,dxdt\big)^\frac34+ \big(r_0^{-2}\int_{Q_{r_0}(z_0)}|P|^\frac32\,dxdt\big)^2\le\epsilon_0^3,
\end{equation}
for $z_0=(x_0,t_0)\in \Omega\times (0,\infty)$ and $0<r_0<\min\big\{{\rm{dist}}(x_0,\partial\Omega), \sqrt{t_0}\big\}$,
then $(u, n^+, n^-,\Psi)\in C^\infty(Q_{\frac{r_0}2}(z_0))$.
\end{theorem}
\begin{proof} It follows from \eqref{a-bound} and Sobolev's embedding theorem
that $\nabla\Psi\in L^\infty_t L^6_x(Q_T)$, and
\begin{equation} \label{16-bound}
\big\|\nabla\Psi\big\|_{L^\infty_t L^6_x(Q_T)}\le C\big\|\Psi\big\|_{L^\infty_tH^2_x(Q_T)}
\le C\big(\|u_0\|_{L^2(\Omega)}, \|(n_0^+, n_0^-)\|_{L^2(\Omega)}\big).
\end{equation}
This implies that
\begin{equation}\label{morrey_bd}
\int_{Q_r(z)} |\nabla\Psi|^4\,dxdt \le Cr^{3} \big\|\nabla\Psi\big\|_{L^\infty_t L^6_x(Q_T)}^4\le Cr^{3}, \ \forall Q_r(z)\subset Q_T.
\end{equation}
It follows from \eqref{morrey_bd} and \eqref{small-9} that  for any $\alpha\in (0, 2)$,
the condition \eqref{small-8} holds on $Q_{\frac{r_0}2}(z_1)$ for any $z_1\in Q_{\frac{r_0}2}(z_0)$, provided
we may choose a smaller $r_0>0$, depending on $\epsilon_0$. Thus by Corollary \eqref{u-small2}, we conclude that
there exists $\theta_0\in (0,\frac12)$ such that
\begin{equation}\label{decay-9}
\begin{split}
(\theta_0^kr_0)^{-2}\int_{Q_{\theta_0^kr_0}(z_1)}|u|^3\,dxdt
+\big((\theta_0^kr_0)^{-2}\int_{Q_{\theta_0^kr_0}(z_1)}|P|^{\frac{3}{2}}\,dxdt\big)^2 \le C\epsilon_0^3\big(\frac12\big)^k,
\end{split}
\end{equation}
for any $z_1\in Q_{\frac{r_0}2}(z_0)$. Therefore there exists $\tau_0\in (0,1)$ such that
\begin{equation}\label{decay-10}
s^{-2}\int_{Q_s(z_1)} |u|^3\,dxdt +\big(s^{-2}\int_{Q_s(z_1)} |P|^\frac32\,dxdt\big)^2\le Cs^{3\tau_0}, 
\end{equation}
for all $z_1\in Q_{\frac{r_0}2}(z_0)$ and $0<s<\frac{r_0}2$. From \eqref{decay-10}, we can repeat the same argument  
of Lemma 3.1 and Corollary 3.1 to improve the exponent $\tau_0$ such that \eqref{decay-10} remains to
be true for all $\tau_0\in (0,1)$.

Now we plan to apply the Riesz potential estimates between parabolic Morrey spaces to show that $u\in L^q(Q_{\frac{r_0}2}(z_0)$
for any $1<q<\infty$, analogous to
that by Huang-Wang \cite{HW}, Hineman-Wang \cite{HLW1}, and Huang-Lin-Wang \cite{HLW}.  

For any open set $U\subset\mathbb R^3\times \mathbb R$, $1\le p<\infty$, and $0\le\lambda\le 5$, define
the Morrey space $M^{p,\lambda}(U)$ by
$$
M^{p,\lambda}(U)
:=\Big\{ f\in L^p_{\rm{loc}}(U):\ \big\|f\big\|_{M^{p,\lambda}(U)}^p 
=\displaystyle\sup_{z\in U, r>0} r^{\lambda-5}\int_{Q_r(z)} |f|^p\,dxdt<\infty\Big\}.
$$
It follows from \eqref{morrey_bd} and \eqref{decay-10} that for any $\alpha\in (0,1)$, it holds that
$$(u, \nabla \Psi)\in M^{3, 3(1-\alpha)}\big(Q_{\frac{r_0}2}(z_0)\big), 
\ P\in M^{\frac32, 3(1-\alpha)}\big(Q_{\frac{r_0}2}(z_0)\big).
$$
We now proceed with the estimation of $u$. Let $\eta\in C_0^\infty(\mathbb R^4)$ be a cut-off function of
$Q_{\frac{r_0}2}(z_0)$ such that $0\le\eta\le 1$, $\eta\equiv 1$ in $Q_{\frac{z_0}2}(z_0)$,
and $|\partial_t\eta|+|\nabla^2\eta|\le Cr_0^{-2}$. Let $v:\mathbb R^3\times (0,\infty)\mapsto \mathbb R^3$ solve the Stokes equation:
\begin{equation}\label{stokes}
\begin{cases}
\partial_t v -\Delta v +\nabla P= - {\rm{div}}\big[\eta^2\big(u\otimes u +(\nabla \Psi\otimes\nabla \Psi-\frac12|\nabla \Psi|^2I_3)\big)\big]
& {\rm{in}}\ \mathbb R^4_+,\\
{\rm{div}} v=0 & {\rm{in}}\ \mathbb R^4_+,\\
v(\cdot, 0)=0 & {\rm{in}}\ \mathbb R^3.
\end{cases}
\end{equation}
By using the Oseen kernel (see Leray \cite{Leray}), an estimate of $v$ can be given by 
\begin{equation}\label{duhamel2}
|v(x,t)|\le C\mathcal{I}_1(|X|)(x,t), \ \forall (x,t)\in\mathbb R^3\times (0,\infty),
\end{equation}
where 
$$X=\eta^2\big[u \otimes u +(\nabla \Psi\otimes\nabla \Psi-\frac12|\nabla \Psi|^2I_3)\big],
$$
and $\mathcal{I}_1$ is the Reisz potential of order $1$ on $\mathbb R^4$ defined by
$$\mathcal{I}_1(g)(x,t)=\int_{\mathbb R^4} \frac{|g(y,s)|}{\delta^4((x,t), (y,s))}\,dyds, \ \forall g\in L^1(\mathbb R^4).$$
We can verify that $X\in M^{\frac32, 3(1-\alpha)}(\mathbb R^4)$ and
\begin{equation*}
\begin{split}
\big\|X\big\|_{M^{\frac32, 3(1-\alpha)}(\mathbb R^4)}&\le C\Big[\|u\|_{M^{3, 3(1-\alpha)}(Q_{\frac{r_0}2}(z_0))}^2+
\|\nabla \Psi\|_{M^{3, 3(1-\alpha)}(Q_{\frac{r_0}2}(z_0))}^2\Big]\\
&\le C(1+\epsilon_0).
\end{split}
\end{equation*}
Hence we conclude that  $v\in M^{\frac{3(1-\alpha)}{1-2\alpha}, 3(1-\alpha)}(\mathbb R^4)$
and 
\begin{equation}\label{morrey6}
\Big\|v\Big\|_{M^{\frac{3(1-\alpha)}{1-2\alpha}, 3(1-\alpha)}(\mathbb R^4)} 
\le C \Big\|X\Big\|_{M^{\frac32, 3(1-\alpha)}(\mathbb R^4)}
\le C(1+\epsilon_0).
\end{equation}
By taking $\alpha\uparrow \frac12$, we conclude that for any $1<q<\infty$, 
$v\in L^q(Q_{\frac{r_0}2}(z_0))$ and
\begin{equation}\label{lpestimate3}
\big\|v\big\|_{L^q(Q_{\frac{r_0}2}(z_0))}\le C(q, r_0, \epsilon_0). 
\end{equation}
Note that $u-v$ solves the linear homogeneous Stokes equation:
$$\partial_t(u-v)-\Delta(u-v)+\nabla P=0, \ {\rm{div}}(u-v)=0 \ \ {\rm{in}}\ \ Q_{\frac{r_0}2}(z_0).$$
Then $u-v\in L^\infty(Q_{\frac{r_0}4}(z_0))$. Therefore for any $1<q<\infty$,
$u\in L^q(Q_{\frac{r_0}4}(z_0))$ and
\begin{equation}\label{lpestimate4}
\big\|u\big\|_{L^q(Q_{\frac{r_0}4}(z_0))}\le C(q, r_0, \epsilon_0).
\end{equation}
From $\Psi\in L^\infty_tH^2_x\cap L^2_tH^3_x(Q_T)$ and the Sobolev inequality, we have that $\Delta\Psi\in 
L^{\frac{10}3}(Q_T)$, $\nabla\Psi \in L^q(Q_T)$ for $q>5$, and
$$\big\|\Delta\Psi\big\|_{L^{\frac{10}3}(Q_T)}+\big\|\nabla\Psi \big\|_{L^q(Q_T)}
\le C\big\| \Psi\big\|_{L^\infty_tH^2_x\cap L^2_tH^3_x(Q_T)}\le C.$$
Since $n^+$ solves
$$\partial_t n^+ -\Delta n^+ =(\Delta\Psi) n^+ - (u-\nabla\Psi)\cdot\nabla n^+\ \ {\rm{in}}\ \ Q_{\frac{r_0}4}(z_0),
$$
where $(u-\nabla\Psi)\in L^q(Q_T)$ and $\Delta\Psi \in L^{\frac{q}2}(Q_T)$ for some $q>5$, we can
apply the standard theory of linear parabolic equation \cite{LSN} to conclude that there exists $\beta\in (0,1)$
such that  $n^+\in C^\beta(Q_{\frac{r_0}4}(z_0))$, and
\begin{equation}\label{n+}
\big\|n^+\big\|_{C^\beta(Q_{\frac{r_0}4}(z_0))}\le C(r_0, \epsilon_0).
\end{equation}
Similarly, we can show that 
$n^-\in C^\beta(Q_{\frac{r_0}4}(z_0))$, and
\begin{equation}\label{n-}
\big\|n^-\big\|_{C^\beta(Q_{\frac{r_0}4}(z_0))}\le C(r_0, \epsilon_0).
\end{equation}
Substituting the estimates \eqref{n+} and \eqref{n-} into the equation \eqref{B}$_5$ for $\Psi$, we conclude that
$\nabla^2\Psi\in L^\infty([t_0-\frac{r_0^2}{64}, t_0], C^\alpha(B_{\frac{r_0}8}(x_0))$ and
\begin{equation}\label{psi}
\big\|\nabla^2\Psi\big\|_{L^\infty([t_0-\frac{r_0^2}{64}, t_0], C^\alpha(B_{\frac{r_0}8}(x_0))}\le C(r_0, \epsilon_0).
\end{equation}
Substituting \eqref{n+}, \eqref{n-}, and \eqref{psi} into the equation \eqref{B}$_{1,2}$, we conclude that
$u\in C^\beta(Q_{\frac{r_0}{16}}(z_0))$ and
\begin{equation}\label{u-holder}
\big\|u\big\|_{C^\beta(Q_{\frac{r_0}{16}}(z_0))}\le C(r_0,\epsilon_0).
\end{equation}
By a bootstrap argument, we can eventually show that $(u, n^+, n^-,\Psi)\in C^\infty(Q_{\frac{r_0}{32}}(z_0))$.
\end{proof}

\begin{remark} Similar to \cite{Sche} and \cite{CKN}, Theorem \ref{l3-small-cont} yields that
$(u, n^+, n^-, \Psi)$ is smooth away from a closed set $\Sigma$, with $\mathcal{P}^{\frac53}(\Sigma)=0$.
\end{remark}

\section{the $\epsilon$-regularity, part II}

In this section, we will improve the size estimate of the singular set $\Sigma$ for suitable weak solutions $(u, P, n^+, n^-,\Psi)$
obtained by Theorem \ref{main}. The argument is based on the A-B-C-D Lemmas, originally due to \cite{CKN}.  
Namely, we want to establish the following theorem.

\begin{theorem}\label{grade u-small} Under the same assumptions as in Theorem \ref{main},  
there exists $\epsilon_1>0$ such that if $(u,P,n^+,n^-, \Psi)$ is the suitable weak solutions of (\ref{B}) given by
Theorem \ref{main}, and satisfies 
\begin{equation}\label{small-10}
 \limsup_{r\rightarrow 0}\frac{1}{r} \int_{Q_r(z_0)}|\nabla u|^2  \,dxdt < \epsilon_1^2
\end{equation}
for $z_0=(x_0,t_0)\in Q_T$,  then $(u, n^+, n^-, \Psi)$ is smooth near $z_0$.
\end{theorem}

For simplicity, we will assume $z_0=(0,0)\in Q_T$. 
In order to prove Theorem \ref{grade u-small}, we first recall the following interpolation inequality, see \cite{CKN}. 

\begin{lemma}\label{eq-Lq}
For $u\in H^1(\mathbb{R}^3)$,
$$\int_{B_r}|u|^q\,dx \leq C \big( \int_{B_r} |\nabla u|^2 \,dx\big)^{\frac{q}2-a} \big( \int_{B_r} |u|^2 \,dx\big)^{a} 
+Cr^{3(1-\frac{q}2)}\big( \int _{B_r} |u|^2\,dx \big)^{\frac{q}{2}},$$
for any $B_r\subset\mathbb R^3$,  $2\leq q\leq 6\quad \text{and} \quad a=\frac{3}{2}(1-\frac{q}6).$
\end{lemma}

Assume $z_0=(0,0)$.  Set
$$A(r)=\sup_{-r^2\le t\le 0} r^{-1} \int_{B_r\times\{t\}} |u|^2 \,dx,$$
$$B(r)=r^{-1}\int_{Q_r} |\nabla u|^2 \,dxdt,$$
$$C(r)=r^{-2}\int_{Q_r} |u|^3 \,dxdt ,$$
$$D(r)=r^{-2}\int_{Q_r} |P|^{\frac{3}{2}} \,dxdt.$$

By Lemma \ref{eq-Lq}, we see that for any $0<r\le\rho$,  it holds that 
\begin{equation}\label{Cr}
  C(r)\leq C_0 \Big [ \big( \frac{r}{\rho} \big)^3 A^\frac32(\rho)
  +\big( \frac{\rho}{r}\big)^3 A^\frac34(\rho) B^\frac34(\rho)\Big].
\end{equation}

Now we need to estimate the pressure function.

\begin{lemma}\label{D}
Let $(u,P,n^+,n^-, \Psi)$ be a suitable weak solution of (\ref{B}) in $Q_1$ given by Theorem
\ref{main}. Then for any $0<r\le\frac{\rho}{2}$, we have
\begin{equation}\label{Dr}
  D(r) \leq C \left[ \frac{r}{\rho} D(\rho) +\left( \frac{\rho}{r} \right)^2A^{\frac{3}{4}}(\rho)B^{\frac{3}{4}}(\rho)+\big(\frac{\rho}{r}\big)^2\rho^\frac32 \right].
\end{equation}

\end{lemma}

\begin{proof}  Taking divergence of \eqref{B}$_1$, we obtain
\begin{equation}\label{P-eqn}
\begin{split}
-\Delta P={\rm{div}}^2\big[(u-(u)_\rho)\otimes (u-(u)_\rho)+(\nabla \Psi \otimes \nabla \Psi-\frac12|\nabla\Psi|^2I_3)\big] \ \ {\rm{in}}\ \ B_\rho.
\end{split}
\end{equation}
Here $(u)_\rho$  denotes the average of $u$  over $B_\rho$.

Let $\eta\in C_0^\infty(\mathbb R^3)$ be a cut off function of $B_{\frac{\rho}2}$ such that 
  \begin{equation}
    \left\{
      \begin{array}{ll}
	\eta=1, &\text{ in }B_{\frac{\rho}2}, \\
	\eta=0, & \text{ outside}\ B_{\rho}, \\
	0\leq \eta\leq 1, & |\nabla \eta|\leq 8\rho^{-1}. 
      \end{array}
      \right.
      \label{}
    \end{equation}
    Define an auxiliary function
    \begin{align*}
      P_1(x, t)&=-\int_{\mathbb R^3}\nabla_y^2 G(x-y):\eta^2(y)\big[ (u-({u})_\rho)\otimes ({u}-({u})_\rho)\\
      &\qquad\qquad\qquad+\big(\nabla \Psi\otimes \nabla \Psi-\frac12|\nabla\Psi|^2 I_3\big)\big](y, t)\,dy,
    \end{align*}
    Then we have
    $$-\Delta P_1={\rm{div}}^2\big[(u-(u)_\rho)\otimes (u-(u)_\rho)+(\nabla \Psi\otimes \nabla \Psi-\frac12|\nabla\Psi|^2I_3)\big]\ \text{ in }\ B_{\frac{\rho}2},$$
    and
    $$-\Delta (P-P_1)=0 \ \ \text{ in }\ \ B_{\frac{\rho}2}.$$
    For $P_1$, we apply the Calderon-Zygmund theory to deduce
    \begin{eqnarray}
      \int_{\mathbb R^3} |P_1|^{\frac{3}{2}}\,dx&\le& C\int_{\mathbb R^3} \eta^3(|{u}-({u})_\rho|^3+|\nabla\Psi|^3)\,dx\nonumber\\
     &\le& C\int_{B_\rho}(|{u}-({u})_\rho|^3+|\nabla \Psi|^3)\,dx
      \label{}
    \end{eqnarray}
Since $P-P_1$ is harmonic in $B_{\frac{\rho}2}$, we get that for $0<r<\frac{\rho}2$, 
\begin{equation*}
\begin{split}
\frac{1}{r^2}\int_{B_r} |P-P_1|^{\frac{3}{2}}\,dx&\leq C (\frac{r}{\rho}) \frac{1}{\rho^2} \int_{B_{\frac{\rho}2}} |P-P_1|^{\frac{3}{2}}\,dx\\
&\le C (\frac{r}{\rho})\big[\frac{1}{\rho^2} \int_{B_{\frac{\rho}2}} |P|^{\frac{3}{2}}\,dx
+\frac{1}{\rho^2} \int_{B_{\frac{\rho}2}} |P_1|^{\frac{3}{2}}\,dx\big].
\end{split}
\end{equation*}
Integrating it over $[-r^2, 0]$,  we can show that
 \begin{eqnarray*}
 &&\frac{1}{r^2}\int_{Q_r}|P|^{\frac{3}{2}}\,dxdt\\
 &&\le C\big(\frac{r}{\rho}\big)\frac{1}{\rho^2}\int_{Q_\rho}|P|^{\frac{3}{2}}\,dxdt+
 C\big(\frac{\rho}{r}\big)^2 \frac{1}{\rho^2}\int_{Q_\rho}(|{u}-({u})_\rho|^3+|\nabla \Psi|^3)\,dxdt\\
 &&\le C\big(\frac{r}{\rho}\big)\frac{1}{\rho^2}\int_{Q_\rho}|P|^{\frac{3}{2}}\,dxdt+
 C\big(\frac{\rho}{r}\big)^2 \frac{1}{\rho^2}\int_{Q_\rho}|{u}-({u})_\rho|^3\,dxdt+C\big(\frac{\rho}{r}\big)^2\rho^\frac32,
 \end{eqnarray*}
 where we have used the inequality \eqref{morrey_bd} in the last step.
 
 This, combined with the interpolation inequality
 \begin{eqnarray*}
  && \frac{1}{\rho^2}\int_{Q_\rho}|{u}-({u})_\rho|^3\,dxdt\nonumber\\
  &&{\le C \sup_{-\rho^2\leq t\leq 0}\big(\frac{1}{\rho} \int_{B_\rho}|{u}|^2\,dx \big)^{\frac{3}{4}}
   \cdot \big(\frac{1}{\rho} \int_{Q_\rho}|\nabla {u}|^2\,dxdt\big)^{\frac{3}{4}}, }
   \label{}
 \end{eqnarray*}
implies that
 $$D(r)\leq C\Big[ (\frac{r}{\rho}) D(\rho)+(\frac{\rho}{r})^2 A^{\frac{3}{4}}(\rho)B^{\frac{3}{4}}(\rho)+\big(\frac{\rho}{r}\big)^2\rho^\frac32\Big].$$
This completes the proof. \end{proof}

\begin{proof}[Proof of Theorem \ref{grade u-small}] Here we follow the presentation by \cite{DHW} closely.
For $0<\theta<\frac12$ and $0<\rho<1$, let $0\le\phi\in C_0^\infty(Q_{\theta\rho})$ be such that 
$$
\phi=1 \ \ {\rm{in}}\ \ Q_{\frac{\theta\rho}2},\  |\nabla\phi|\le \frac{4}{\theta\rho},\  \ |\nabla^2\phi|+|\partial_t\phi|\le \frac{16}{(\theta\rho)^2}.
$$
Applying the local energy inequality \eqref{GE-1} and using ${\rm{div}} u=0$, we obtain
\begin{equation}\label{GE-2}
   \begin{split}
     &\sup_{-(\theta\rho)^2\le t\le 0}\int_{\Omega}|u|^2\phi^2\, dx + 2\int _{\Omega\times [-(\theta\rho)^2, 0]}|\nabla u|^2 \phi^2 \,dxdt \\
&\leq  \int_{\Omega\times [-(\theta\rho)^2, 0]}|u|^2(|\partial_t\phi|+|\nabla\phi|^2+|\nabla^2\phi|) \,dxdt \\
&\ \ +  \int_{\Omega\times [-(\theta\rho)^2, 0]}\big(||u|^2-(|{u}|^2)_\rho| +2|P|)|u| |\nabla\phi|\, dxdt\\
& \ \ +2\int_{\Omega\times [-(\theta\rho)^2, 0]} \big|\nabla\Psi\otimes\nabla\Psi-\frac12|\nabla\Psi|^2 I_3\big|(|\nabla u|\phi+|u||\nabla\phi|)\, dxdt,
 \end{split}
\end{equation}
where 
$$(|{u}|^2)_\rho=\fint_{B_\rho} |u|^2 \,dx$$
is the average of $|u|^2$ over $B_\rho$.
By using Sobolev's inequality, we have
$$\big(\int_{B_{\rho}}||u|^2- (|{u}|^2)_\rho^2|^{\frac{3}{2}} \,dx \big)^{\frac{2}{3}} \leq  C \int_{B_\rho} |u||\nabla u| \,dx.$$
By H\"older's inequality, we can bound
\begin{equation*}
\begin{split}
&\int_{\Omega\times [-(\theta\rho)^2, 0]} \big|\nabla\Psi\otimes\nabla\Psi-\frac12|\nabla\Psi|^2 I_3\big|(|\nabla u|\phi+|u||\nabla\phi|)\, dxdt\\
&\le c\int_{Q_{\theta\rho}} |\nabla\Psi|^2|\nabla u|\,dxdt+ c(\theta\rho)^{-1}\int_{Q_{\theta\rho}} |\nabla\Psi|^2|u|\,dxdt\\
&\le c(\theta\rho)^\frac12 B^\frac12(\theta\rho) \big(\int_{Q_{\theta\rho}}|\nabla\Psi|^4\,dxdt\big)^\frac12 \\
&\ \ +c(\theta\rho)^{-1}\big(\int_{Q_{\theta\rho}}|\nabla\Psi|^3\,dxdt\big)^\frac23 \big(\int_{Q_{\theta\rho}}|u|^3\,dxdt\big)^\frac13\\
&\le c(\theta\rho)^2\big(B^\frac12(\theta\rho)+C^\frac13(\theta\rho)\big),
\end{split}
\end{equation*}
where we have used in the last step \eqref{morrey_bd} and
$$\int_{Q_{\theta\rho}}|\nabla\Psi|^4\,dxdt\le c(\theta\rho)^\frac32.$$
Substituting these two estimates into \eqref{GE-2}, we obtain
\begin{equation}\label{A,B}
  \begin{split}
A(\frac12\theta\rho)+B(\frac12\theta\rho)&\leq  c\Big[C^{\frac{2}{3}}(\theta\rho) +A^\frac12(\theta\rho)B^\frac12(\theta\rho)C^\frac13(\theta\rho)\\
&\ \ \ +C^\frac13(\theta\rho)D^\frac23(\theta\rho)+(\theta\rho)^2B^\frac12(\theta\rho)+(\theta\rho)^2 C^\frac13(\theta\rho)\Big]\\
&\le c\Big[ C^{\frac{2}{3}}(\theta\rho)+A(\theta\rho)B(\theta\rho)+(\theta\rho)^4+(\theta\rho)^2B^\frac12(\theta\rho)+D^\frac43(\theta\rho)\Big].
  \end{split}
\end{equation}
Thus we obtain 
$$A^\frac32(\frac12\theta\rho)\le c\Big[C(\theta\rho)+A^\frac32(\theta\rho)B^\frac32(\theta\rho)+D^2(\theta\rho)+(\theta\rho)^6+(\theta\rho)^3B^\frac34(\theta\rho)\Big].
$$
While we also have
$$D^2(\theta\rho)\le c\theta^2\big[D^2(\rho)+\theta^{-6}A^\frac32(\rho)B^\frac32(\rho)+\theta^{-6}\rho^3\big],$$
$$C(\theta\rho)\le c\Big[\theta^3 A^\frac32(\rho)+\theta^{-3}A^\frac34(\rho)B^\frac34(\rho),$$
and
$$A^\frac32(\theta\rho)B^\frac32(\theta\rho)
\le \theta^{-3} A^\frac32(\rho)B^\frac32(\rho).$$
Putting all these estimates together, we arrive at
\begin{equation*}
\begin{split}
&A^\frac32(\frac12\theta\rho)+D^2(\frac12\theta\rho)\\
&\le c\Big[\theta^2(D^2(\rho)+A^\frac32(\rho))+\theta^{-8}A^\frac32(\rho)B^\frac32(\rho) +\theta^2+\theta^{-4}\rho^3+\theta^6\rho^6+\theta^\frac94\rho^3B^\frac34(\rho)\Big]\\
&\le c\big(\theta^2+\theta^{-8}B^\frac32(\rho)\big)\big(A^\frac32(\rho)+D^2(\rho)\big)+c\big(\theta^2+\theta^{-4}\rho^3+\theta^6\rho^6+\theta^\frac94\rho^3B^\frac34(\rho)\big).
\end{split}
\end{equation*}
For $\epsilon_1>0$ given by Theorem , let $\theta_0\in (0,\frac12)$ be such that
$$c\theta_0^2=\min\{\frac14, \frac18\epsilon_1^2\}.$$
Since
$$\limsup_{r\to 0} \frac{1}{r}\int_{Q_r} |\nabla u|^2\,dxdt<\epsilon_1^2,$$
we can choose $\rho_0>0$ such that
$$c\theta_0^{-2}B^\frac32(\rho)\le \frac14, \ \forall 0<\rho<\rho_0,$$
and
$$
c\big(\theta_0^2+\theta_0^{-4}\rho^3+\theta_0^6\rho^6+\theta_0^\frac94\rho^3B^\frac34(\rho)\big)\le \frac12\epsilon_1^2, \ \forall 0<\rho<\rho_0.$$
Therefore we obtain that there exist $\theta_0\in (0,\frac12)$ and $\rho_0>0$ such that
$$A^\frac32(\frac12\theta_0 \rho) + D^2 (\frac12\theta_0 \rho) \leq \frac12 \big(A^\frac32( \rho) + D^2 (\rho)^2\big) +\frac12\epsilon_1^2, \ \forall 0<\rho<\rho_0.$$
Iterating this inequality yields that
 \begin{align}\label{decay1}
 A^\frac32((\frac12{\theta_0)^k \rho})+D^2((\frac12{\theta_0)^k \rho})\le \frac{1}{2^k} (A^\frac32(\rho)+D^2(\rho))
 +\epsilon_1^2
 \end{align} 
holds for all $0<\rho<\rho_0$ and $k\ge 1$.

Employing \eqref{decay1} and \eqref{Cr}, we obtain that
\begin{align}\label{decay2}
C((\frac12\theta_0)^{k}\rho)&\le c\big[(\frac12\theta_0)^3 A^\frac32((\frac12\theta_0)^{k-1}\rho)
+(\frac12\theta_0)^{-3} A^\frac34((\frac12\theta_0)^{k-1}\rho) B^\frac34((\frac12\theta_0)^{k-1}\rho)\big]\nonumber\\
&\le c\big[(\frac12\theta_0)^3 +(\frac12\theta_0)^{-3} \varepsilon_1^\frac32\big]
\big[\frac{1}{2^{k-1}} (A^\frac32(\rho)+D^2(\rho))
 +\epsilon_1^2\big]
\end{align}
holds for all $0<\rho<\rho_0$ and $k\ge 1$.

Putting \eqref{decay1} and \eqref{decay2} together, we obtain that
\begin{align}\label{decay3}
\limsup_{k\to\infty} \big[C((\frac12\theta_0)^{k}\rho)+D^2((\frac12\theta_0)^{k}\rho)\big]
&\le c\big[1+(\frac12\theta_0)^3 +(\frac12\theta_0)^{-3} \epsilon_1^\frac32]\epsilon_1^2
\le \frac12\epsilon_0^3,
\end{align}
holds for all $\rho\in (0,\rho_0)$, provided $\epsilon_1=\epsilon_1(\theta_0, \epsilon_0)>0$ is chosen
sufficiently small. Therefore, by Theorem \ref{l3-small-cont}
$({u}, n^+, n^-, \Psi)$ is smooth near $z_0=(0,0)$.  This completes the proof. \end{proof}

\medskip
\noindent{\it Completion of Proof of Theorem \ref{main}}:  Define the singular set of $(u, n^+, n^-, \Psi)$ by 
$$\Sigma=\Big\{(x,t)\in Q_T\ |\ \limsup_{r\rightarrow 0} r^{-1}\int_{Q_r(x,t)}|\nabla u|^2\,dxdt>\epsilon_1^2\Big\}.$$
From Theorem \ref{grade u-small}, we know that $\Sigma$ is closed and $(u, n^+, n^-, \Psi)\in C^\infty(Q_T\setminus\Sigma)$.

Let $U$ be a small neighborhood of $\Sigma$ and let $\delta >0$. For each $(x,t)\in \Sigma$,  choose $0<r<\delta$ such that
$$r^{-1}\int_{Q_r(x,t)} |\nabla u|^2\,dxdt >\epsilon_1^2 \text{  and  } Q_{r}(x,t) \subset U.$$
By Vitali's five time covering Lemma, there exists a disjoint subfamily $\{\,Q_{r_i}(x_i,t_i)\,\}$ such that
$$\Sigma \subset \bigcup_i \,Q_{5r_i}(x_i,t_i).$$
Hence
\begin{eqnarray*}
\mathcal{P}^1_{5\delta}(\Sigma)\le \sum_i 5r_i \leq 5\epsilon_1^{-2} \sum_{i}\int_{Q_{r_i}(x_i,t_i)}|\nabla u|^2\,dxdt\leq 5\epsilon^{-2}_1 \int_{U}|\nabla u|^2\,dxdt.
\end{eqnarray*}
Sending $\delta\to 0$, this implies that $\mathcal{P}^1(\Sigma)=0$. The proof is now complete.
\end{proof}

\bigskip
\noindent{\bf Acknowledgements}.  The first author is partially supported by Natural Science Foundation of China (grant
No.11601342, No.61872429, No.11871345, ).
The second author is partially supported by NSF grant 1764417. Both the first and third
authors would like to express their gratitudes to Department of Mathematics, Purdue University, 
where the project was initiated during their visit to the second author. The authors wish to thank Qiao Liu for some helpful discussion.



\begin{thebibliography}{99}

\bibitem{CKN} L. Caffarelli, R. Kohn, L. Nirenberg, 
{\em Partial regularity of suitable weak solutions of the Navier-Stokes equations. }
Comm. Pure  Appl. Math. 35 (1982), 771-831.

\bibitem{AC} A. Castellanos, Electrohydrodynamics. Springer, 1998.

\bibitem{DHW} H.R. Du, X. P. Hu, C. Y. Wang, {\em Suitable weak solutions for the co-rotational Beris-Edwards system in dimension three}. ArXiv:1905.08440.

\bibitem{DZC} D. Chao, J. Zhao, S. Cui, {\em Well-posedness for the Navier-Stokes-Nernst-Planck-Poisson sysetm in Triebel-Lizorkin space and Besov space with negative indices}.
J. Math. Anal. Appl. 377 (2011), no.1, 392-405.

\bibitem{FLN} J. Fan, F. Li, G. Nakamura, {\em Regularity criteria for a mathematical model for the deformation of electrolyte droplets}. App. Math. Lett. 26 (203), no. 4, 494-499.

\bibitem{FI} W. Fang, K. Ito, {\em On the time-dependent drift-diffusion model for semiconductors}. J. Diff. Eqns., 117 (1995), 245-280.

\bibitem{G} H. Gajewski, {\em On existence, uniqueness and asymptotic behavior of solutions of the basic equations for carrier transport in semiconductors}. Z. Angew. Math. Mech. 65 (2) (1985) 101-108.


\bibitem{Hynd} R. Hynd, {\em Partial regularity of weak solutions of the viscoelastic {N}avier-{S}tokes equations with damping}.
SIAM J. Math. Anal. 45 (2013) no. 2, 495-517.

\bibitem{JJ} J. Jerome,  {\em Analytical approaches to charge transport in a moving medium}, {Transport Theory Statist. Phys.} 31 (2002), no. 4-6, 333-366. 
 {Special issue comprised of papers presented at the Conference on Asymptotic and Numerical Methods for Kinetic Equations
 (Oberwolfach, 2001)}.
  

\bibitem{JS} J.  Jerome, R. Sacco, {\em Global weak solutions for an incompressible charged fluid with multi-scale couplings: initial-boundary-value problem}.
Nonlinear Anal. 71 (2009) (12),  e2487--e2497.

\bibitem{Leray} J. Leray, {\em Sur le mouvement d\'un liquide visqueux emplissant l\'espace.}
Acta. Math. 63, 183-248 (1934).


\bibitem{LSN} O. A. Ladyzhenskaya, V.A. Solonnikov, N.N. Ural\'ceva, {Linear and Quasi-linear Equations 
of Parabolic Type.} American Mathematical Society. (1968).

\bibitem{LIN} F. Lin, {\em A new proof of the Caffarelli-Kohn-Nirenberg theorem.} 
Comm. Pure Appl. Math. 51 (1998), 241-257. 

\bibitem{LL} F. Lin, C. Liu, {\em Partial regularity of the dynamic system modeling the flow of  liquid crystals}. {Disc. Cont. Dyn. Syst.}, 2 (1996), no. 1,  1-22.

\bibitem{HW} T. Huang, C.  Wang, {\em Notes on the regularity of harmonic map systems}. Proc. Amer. Math. Soc. 138 (2010), no. 6, 2015-2023.

\bibitem{HLW1} J.  Hineman,  C. Wang, {\em Well-posedness of nematic liquid crystal flow in 
$L_{\rm{uloc}}^3(\mathbb R^3)$}. Arch. Ration. Mech. Anal. 210 (2013), no. 1, 177-218.

\bibitem{HLW} J. Huang, F. Lin, C. Wang, {\em Regularity and existence of global solutions to the Ericksen-Leslie system in $\mathbb R^3$}. Comm. Math. Phys. 331 (2014), no. 2, 805-850. 

\bibitem{M} M.S. Mock, {\em An initial value problem from semiconductor device theory}. SIAM J. Math. Anal. 5 (1974) 597-612.

\bibitem{Roo} V. Roosbroeck, {\em Theory of the flow of electrons and holes in germanium and other semiconductors}. Bell System Tech. J. 29 (1950), 560-607.

\bibitem{Ru} I. Rubinstein, Electro-diffusion of ions.  SIAM Studies in Applied Mathematics, Vol. 11, Society for Industrial and Applied Mathematics (SIAM),
              Philadelphia, PA, 1990.
              
\bibitem{Sch} M. {Schmuck}, {\em Analysis of the {N}avier-{S}tokes-{N}ernst-{P}lanck-{P}oisson system}. Math. Models Methods Appl. Sci., {19} (2009), no. 6, 993-1015.

\bibitem {Sche} V. Scheffer,  {\em Partial regularity of solutions to the {N}avier-{S}tokes equations}. Pacific J. Math., {66} ({1976}), no. 2, 535-552.
  
\bibitem{Serrin} J. Serrin,  {\em On the interior regularity of weak solutions of the {N}avier-{S}tokes equations}. Arch. Ration. Mech. Anal., 9 (1962), 187-195.

\bibitem{SS} L.  Iskauriaza, G.  Seregin, V. Sverak, {\em $L^{3,\infty}$-solutions of Navier-Stokes equations and backward uniqueness.}
(Russian) Uspekhi Mat. Nauk 58 (2003), no. 2(350), 3--44; translation in Russian Math. Surveys 58 (2003), no. 2, 211--250.

\bibitem{ST} T. Seidman, G. Troianiello, {\em Time-dependent solutions of a nonlinear system arising in semiconductor theory}. Nonlinear Anal. 9 (1985), 1137-1157.

\bibitem{SW} H. Sohr,  W. von Wahl, {\em On the regularity of the pressure of weak solutions of {N}avier-{S}tokes equations}. Arch. Math. (Basel),  {46} ({1986}), no. 5, 428-439.
 
\bibitem{Temam} R. Temam, {Navier-Stokes Equations: Theory and Numerical Analysis.} American Mathematical Soc. (2001).

\bibitem{WLT1} Y.  Wang, C. Liu, Z. Tan, {\em A generalized Poisson-Nernst-Planck-Navier-Stokes model on the fluid with the crowded charged particles: derivation and its well-posedness.} SIAM J. Math. Anal. 48 (2016), no. 5, 3191-3235. 
 
 \bibitem{WLT2} Y.  Wang, C. Liu, Z. Tan, {\em Well-posedness on a new hydrodynamic model of the fluid with the dilute charged particles.}
 J. Diff. Eqns. 262 (2017), no. 1, 68-115.

\bibitem{ZZL} J. Zhao, T. Zhang, Q. Liu, {\em Global well-posedness for the dissipative system modeling electro-hydrodynamics with large vertical velocity component in critical Besov spaces}.
Disc. Cont. Dyn. Syst. 35 (1) (2015), 555-582.

\end{thebibliography}
\end{document}